\documentclass{aimsmod} %
\usepackage{amsmath}
\usepackage{paralist}
\usepackage[misc]{ifsym}
\usepackage{epsfig} 
\usepackage{epstopdf} 
\usepackage[colorlinks=true]{hyperref}
\hypersetup{urlcolor=blue, citecolor=red}
\allowdisplaybreaks

\textheight=8.2 true in
 \textwidth=5.0 true in
  \topmargin 30pt
   \setcounter{page}{1}




\usepackage{amssymb}
\usepackage{amsfonts}
\usepackage{mathtools}
\usepackage{xcolor}
\usepackage{subcaption}
\usepackage{tikz}
\usepackage{tikz-cd}
\usetikzlibrary{arrows,decorations.pathmorphing,backgrounds,positioning,fit,petri,calc,shapes.misc,decorations.markings}
\usepackage{float}
\usepackage{enumerate}
\captionsetup[subfigure]{labelfont=rm}
\usepackage{stackengine}
\setstackEOL{\\}
\usepackage[ruled,vlined]{algorithm2e}

\newtheorem{theorem}{Theorem}[section]

\newtheorem{proposition}[theorem]{Proposition}

\theoremstyle{definition}

\newtheorem{remark}[theorem]{Remark}
\newtheorem{example}[theorem]{Exampe}

\DeclareMathOperator*{\argmax}{arg\,max}
\DeclareMathOperator*{\argmin}{arg\,min}
\newcommand{\sphere}{\mathbb{S}}
\newcommand{\trans}{\operatorname{t}}
\newcommand{\realrad}[2]{\sqrt[\textup{re}]{{#2}_{#1}}}

\newcommand{\C}{\mathbb{C}}
\newcommand{\N}{\mathbb{N}}
\newcommand{\F}{\mathbb{F}}
\newcommand{\K}{\mathbb{K}}
\newcommand{\Q}{\mathbb{Q}}
\newcommand{\R}{\mathbb{R}}

\numberwithin{figure}{section}
\numberwithin{equation}{section}


\title[Algebraic Machine Learning]
{Algebraic Machine Learning with an Application to Chemistry} 

\author[Ezzeddine El Sai and Parker Gara and Markus J. Pflaum]{}

\subjclass{Primary: 14Q30;68T05 Secondary: 62R07;92E10.}
\keywords{statistical learning,  real algebraic geometry, varieties, singularities, maximum a posteriori problem,
  {\L}ojasiewicz inequality, conformation space, cyclooctane.}


\thanks{$^*$Corresponding author: Markus J. Pflaum}


\begin{document}
\maketitle

\centerline{\scshape
Ezzeddine El Sai$^{{\href{ezzeddine.elsai@colorado.edu}{\textrm{\Letter}}}1}$
and Parker Gara$^{{\href{parker.gara@colorado.edu}{\textrm{\Letter}}}1}$
and Markus J. Pflaum$^{{\href{markus.pflaum@colorado.edu}{\textrm{\Letter}}}*1}$
}

\medskip

{\footnotesize
 \centerline{$^1$Department of Mathematics, University of Colorado, Boulder CO 80309, USA}
} 

\medskip


\bigskip



 \begin{abstract}
As data used in scientific applications become more complex, studying the geometry and topology of data has become an increasingly prevalent part of data analysis. This can be seen for example with the growing interest in topological tools such as persistent homology. However, on the one hand, topological tools are inherently limited to providing only coarse information about the underlying space of the data. On the other hand, more geometric approaches rely predominantly on the manifold hypothesis which asserts that the underlying space is a smooth manifold. This assumption fails for many physical models where the underlying space contains singularities.

In this paper, we develop a machine learning pipeline that captures fine-grain geometric information without having to rely on any smoothness assumptions. Our approach involves working within the scope of algebraic geometry and algebraic varieties instead of differential geometry and smooth manifolds. In the setting of the variety hypothesis, the learning problem becomes to find the underlying variety using sample data. We cast this learning problem into a Maximum A Posteriori optimization problem which we solve in terms of an eigenvalue computation. Having found the underlying variety, we explore the use of Gr\"obner bases and numerical methods to reveal information about its geometry. In particular, we propose a heuristic for numerically detecting points lying near the singular locus of the underlying variety.   
\end{abstract}


\section{Introduction}
Studying the geometry of data can provide insight into high dimensional, highly complex datasets. Applications include dimensionality reduction \cite{mcinnes2020umap}, computer vision \cite{10.5555/1145132}, chemistry \cite{Martin2010} and medicine\cite{AhmadiMoughari2020ADRMLAD}. Geometric methods in machine learning rely predominantly on \textit{the manifold hypothesis} \cite{Fefferman2016} which asserts that sample data $\Omega\subset \mathbb{R}^{n}$ in fact lie on a smooth submanifold $M\subset \mathbb{R}^{n}$ of dimension $<<n$. Algorithms based upon the this hypothesis are often called \textit{manifold learning} methods. Examples include PCA and nonlinear PCA \cite{Leeuw2013HistoryON}, Isomap \cite{Tenenbaum2000AGG}, and UMAP \cite{mcinnes2020umap}. However, the manifold hypothesis does no always hold, especially when the underlying geometry of the data contains singularities. Singularities are in fact ubiquitous in mathematics and appear often in physical models \cite{doi:10.1021/jp962746i,reiman1996singular}. This explains  the need to go beyond the manifold hypothesis.

To transition from the smooth to the singular setting we replace the language of differential geometry and smooth manifolds with algebraic geometry and algebraic varieties. At a basic level, algebraic geometry studies the geometry of the zeros of systems of polynomials. Those zero sets are called \textit{algebraic varieties}. At the heart of algebraic geometry is the duality between geometry and algebra, which allows us to jump back and forth between geometric spaces and computable algebraic procedures. Furthermore, algebraic geometry offers a natural setting for studying and working with singularities. As we shall see, this \textit{variety hypothesis} provides a great amount of flexibility and it lends itself well for computations. 

To examine data in the singular setting we introduce the \textit{algebraic machine learning pipeline} depicted in Figure \ref{fig:pipeline}. This pipeline combines ideas from algebraic geometry and machine learning and it is the main subject of this paper.

\medskip

\begin{figure}
\centering
\begin{tikzcd}
& & \fbox{\Centerstack{Algebraic\\Computations}} \arrow[rd] & & \\
\fbox{\Centerstack{Data}} \arrow[r] & \fbox{\Centerstack{Learning the\\Underlying Variety}} \arrow[ru] \arrow[rd] & & \fbox{\Centerstack{Geometric\\Insights}} \\
& & \fbox{\Centerstack{Numerical\\Computations}} \arrow[ru] & &
\end{tikzcd}
\caption{}\label{fig:pipeline}
\end{figure}

\medskip
\noindent
In Section \ref{sec:real-algebraic-geometry}, we give a brief overview of real algebraic geometry, building the language we will be using throughout the paper. In Section \ref{sec:learning-variety}, we discuss the first stage of learning the underlying variety. We introduce the following learning problem:
\begin{itemize}
\item[(LP)] 
  \textit{Given a dataset $\Omega=(a_{1},\ldots ,a_{m})$ of points in $[0,1]^{n}$ sampled from a variety $V \subset \mathbb{R}^n$,
  find that variety.}   
\end{itemize}
We interpret this learning problem as a Maximum A Posteriori (MAP) problem which can be solved in terms of an eigenvalue computation. This gives a more rigorous justification for the algorithm presented in \cite{Breiding2018}. Building upon this work, we present a closely related algorithm as Algorithm \ref{MAP-Model} as well as a possible enhancement as Algorithm \ref{Intersected-MAP-Model}.
Let us mention that restricting the sample points to be contained in a unit
hypercube might not be an obvious choice, but restriction to some bounded
region is necessary for the later statistical arguments to make sense, see Remark \ref{rem:lojasiewicz}. 
Indeed, the paper \cite{8999343} by Dufresne et al.\ makes similar assumptions. 

In Section \ref{sec:algebraic-computations}, we introduce the algebraic computations stage.
Using the learned variety $V$ from the previous stage, we explore the use of Gr\"obner basis computations to reveal information about $V$. This includes invariants like the dimension and the
irreducible decomposition.

In Section \ref{sec:singular-heuristics}, we consider the numerical computations stage which involves
working directly with the points in $\Omega$ and additional samples taken from $V\cap[0,1]^{n}$. In particular, we introduce a method called the \textit{singularity heuristic}
for detecting points in $[0,1]^{n}$ lying near the singular locus of $V$.

In Section \ref{sec:results}, we test the algebraic machine learning pipeline on synthetic data and on chemical data sampled from the conformation space of cyclooctane \cite{Martin2010}. Finally, in Section \ref{sec:related-work}, we discuss other work related to the algebraic machine learning pipeline.



\section{Background in Real Algebraic Geometry}
\label{sec:real-algebraic-geometry}
For the reader's convenience, we collect in this section several fundamental concepts from real and complex
algebraic geometry which are used throughout this paper; see \cite{BocCosRoyRAG} and \cite{HarAG} for further details.
In this paper, we will be working mostly over the field $\R$ and  its subfields $\Q$ and
$\R_{\textup{alg}} := \overline{\Q}\cap \R$ of rational and  real algebraic numbers, respectively. The symbol $\K$
always denotes a field which we assume to be of characteristic $0$ unless stated otherwise.
\subsection{Ordered Fields}

A total order $\leq$ on a field $\K$ is called a
\textit{field order} if it is compatible with the field operations in the
following sense:
\begin{enumerate}[(M1)]
\item\label{ite:monotonyaddition} If $a\leq b$, then for all $c\in\K$ the
    relation  $a+c\leq b+c$ holds true.
\item\label{ite:monotonymultiplication} If $0\leq a$ and $0\leq b$, then $0\leq a\cdot b$.
\end{enumerate}
A field $\K$ endowed with a field order $\leq$ is an \textit{ordered field}.
We always assume $\R$ and $\Q$ to be equipped with the standard field order
$\leq$. The set of natural numbers $\N \subset \Q$ carries the induced
natural order.
Note that the field of complex numbers $\mathbb{C}$ and finite fields
cannot be endowed with the structure of an ordered field. For finite fields this follows from the
simple observation that for any ordered field $\K$ the natural map $\N \to \K$ 
is strictly increasing, hence $\K$ needs to be infinite. For the field of complex numbers see
Example \ref{ex:cplxnumbers} (a) below. 

The order on an ordered field $(\K,\leq)$ is completely characterized by the
\emph{positive  cone} it generates, which is the set
$C = \{ a \in \K \mid 0 \leq a \} $. Let us explain this in some more detail. Recall from
e.g.~\cite[Sec.~1.1]{BocCosRoyRAG} that by a \emph{proper cone} of $\K$ one understands a
subset $C \subset \K$ which fulfills the relations
\begin{equation}
  \label{eq:cone-axioms}
  C + C \subset C , \quad C \cdot C \subset C , \quad \K^2 \subset C, \text{ and } -1 \notin C \ ,
\end{equation}
where $ C+C = \{a+b| a, b \in C \}$, $C\cdot C = \{ a \cdot b|a, b \in C\}$ and
$\mathbb{K}^2 = \{a\cdot a| a \in K\}$. 
A proper cone $C$ of $\K$ is a \emph{positive cone} if in addition 
\begin{equation}
  \label{eq:positivity-axiom}
  \K = C \cup (-C) \ .
\end{equation}
A positive cone $C$ of $\K$ defines a unique field order $\leq_C$
by putting $a \leq_C b$ if and only if $b-a \in C$.
The thus  defined relation $\leq_C$ satisfies property (M\ref{ite:monotonyaddition}) by definition
and (M\ref{ite:monotonymultiplication}) since $C \cdot C \subset C$.
Finally, the cone $C$ is given by the set $\{ a \in \K \mid 0 \leq_C a \} $ which concludes 
the argument that field orders on $\K$ are in bijective correspondence with positive cones $C \subset \K$.
\begin{example}\label{ex:cplxnumbers}
  \begin{enumerate}[(a)]
  \item 
  The field of complex numbers $\C$ does not possess a positive cone since if $C \subset \C$ were such a cone,
  then $-1 = i^2 \in C$ which contradicts \eqref{eq:cone-axioms}.
\item
  For the fields $\K =\R$ or $\K = \Q$, the standard field order coincides with the order
$\leq_P$ associated to the positive cone
\[ P = \big\{ a \in \K \, \big| \:  \exists\,  a_1,\ldots, a_n \in \K : \: a = \sum_{i=1}^n a_i^2 \, \big\} \ . \]
\end{enumerate}
\end{example}

An ordered field $(\K,\leq)$ is called \textit{real closed} if every polynomial $f\in\K[x]$ of odd degree has a root in $\K$ 
and if for every positive element $a\in\K$ there exists $b\in\K$ such that $a=b^{2}$. For example, $(\mathbb{R},\leq)$ is real closed, but
$(\mathbb{Q},\leq)$ is not since the element $2$ is positive but does not have a rational square root. 
For a real closed field $(\K,\leq)$, the vector space $\K^n$ can be endowed with the $\K$-valued metric
\begin{equation}\label{eq:metric-real-closed-field}
 \|a-b\|_{2}=\sqrt{(a_{1}-b_{1})^{2}+ \ldots +(a_{n}-b_{n})^{2}}\ , \quad a,b \in \K^n \ .
\end{equation}
If $\K\subset \R$ is a real closed subfield, this results in the standard Euclidean metric restricted to $\K^n$.

The \textit{real closure} of an ordered field $(\K,\leq_\K)$ is an ordered field $(\F,\leq_{\F})$ which is real closed and
extends $(\K,\leq_\K)$, that is, $\K\subset \F$ and $C_\K\subset C_\F$, where  $C_\K$ and $C_\F$ are the positive cones
in $\K$ and $\F$, respectively. Real closures exist and are essentially unique. For example, the real closure of $(\mathbb{Q},\leq)$ is
$(\mathbb{R}_{alg},\leq_P)$, where  $P$ is understood as above to be the set of sums of squares in $\mathbb{R}_{alg}$.



\subsection{The Nullstellensatz}
The set of polynomials in $n$ variables over a field $\K$ forms a ring
$\K[x_{1},\ldots ,x_{n}]$. Given a subset $S$ of the polynomial ring $\K[x_{1},\ldots ,x_{n}]$,
we denote by $Z_\K (S)$ or more briefly by $Z(S)$ the \emph{zero-set} of $S$, that is the set 
\[ 
   Z(S) = Z_\K(S) = \{a\in \K^{n}| f(a)=0 \text{ for all } f \in S\} \ .
\]
By $\langle S \rangle_\K$ or more shortly by $\langle S \rangle$ when the ground field is clear,
one denotes the ideal generated by $S$ which is the intersection of all ideals in
$\K[x_{1},\ldots ,x_{n}]$ containing $S$. Both the set $S$ and the ideal $\langle S \rangle$ have the same
zero
set $Z(S)=Z(\langle S\rangle)$ since if $f_{1}(a)= \ldots =f_{k}(a)=0$ for polynomials $f_{1},\ldots ,f_{k}$
and $a\in \K^n$, then
$g_{1}(a)\cdot f_{1}(a)+ \ldots +g_{k}(a)\cdot f_{k}(a)=0$ for all $g_{1},\ldots ,g_{k} \in \K[x_{1},\ldots ,x_{n}]$.

Since $\K[x_{1},\ldots ,x_{n}]$ is noetherian, any ideal $I\subset \K[x_{1},\ldots ,x_{n}]$ can be written as
$\langle f_{1},\ldots ,f_{k}\rangle$ for some $k$ and $f_{1},\ldots ,f_{k}\in I$. Therefore, starting with a possibly
infinite set $S$, $Z(S)$ can always be rewritten as $Z(S')$ for some finite set of polynomials $S'$.

Over a subfield of $\mathbb{R}$, this result can be sharpened further. For notional convenience, we will denote $Z(\{f\})$ by $Z(f)$. Note that the following proposition does not hold true over the field $\mathbb{C}$.

\begin{proposition}
\label{prop:real-variety-zero-set-single-polynomial}
  Let $\K$ be a subfield of $\mathbb{R}$. If $S$ is a non-empty subset of the polynomial ring $\K[x_{1},\ldots ,x_{n}]$, then
  $Z(S)$ can be written as $Z(f)$ for a single polynomial $f\in \K[x_{1},\ldots ,x_{n}]$.
\end{proposition}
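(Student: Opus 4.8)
The plan is to reduce the problem to a finite set of polynomials and then combine them into a single sum of squares. First I would invoke the noetherianity of $\K[x_1,\ldots,x_n]$, exactly as recalled in the paragraph preceding the statement: $Z(S)$ equals $Z(S')$ for some finite set $S' = \{f_1,\ldots,f_k\} \subset \K[x_1,\ldots,x_n]$ (take $f_1,\ldots,f_k$ to be generators of the ideal $\langle S\rangle$ and use $Z(S)=Z(\langle S\rangle)$), and since $S\neq\emptyset$ we may take $k\geq 1$. It then suffices to produce a single polynomial $f$ with $Z(f)=Z(\{f_1,\ldots,f_k\})$.

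For this I would set $f := f_1^2+\cdots+f_k^2 \in \K[x_1,\ldots,x_n]$. Every common zero of $f_1,\ldots,f_k$ is clearly a zero of $f$, so $Z(\{f_1,\ldots,f_k\})\subset Z(f)$. Conversely, let $a\in\K^n$ satisfy $f(a)=0$; this is the step where the hypothesis $\K\subset\R$ is essential. If some $f_j(a)\neq 0$, then, $\R$ being an ordered field, $f_j(a)^2 > 0$ while $\sum_{i\neq j} f_i(a)^2 \geq 0$, whence $f(a)\geq f_j(a)^2 > 0$, a contradiction. Therefore $f_i(a)=0$ for all $i$, i.e.\ $a\in Z(\{f_1,\ldots,f_k\})$. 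Combining the two inclusions gives $Z(S)=Z(\{f_1,\ldots,f_k\})=Z(f)$, as desired.

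No step here is genuinely an obstacle: the entire content is the observation that over an ordered field a finite sum of squares vanishes only when each summand does. It is worth stressing that this is precisely the point that breaks down over $\C$ — for instance $x_1^2+x_2^2$ has zeros other than the origin — which accounts for the parenthetical remark made just before the proposition. If one prefers an intrinsic formulation, the positivity step can be rephrased using the positive cone $P$ of sums of squares introduced earlier, but for a subfield of $\R$ the direct comparison with the standard order of $\R$ is the cleanest presentation.
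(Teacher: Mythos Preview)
Your argument is correct and follows exactly the paper's own proof: reduce to a finite generating set by noetherianity and then take the sum of squares $f=f_1^2+\cdots+f_k^2$, using the order on $\R$ to see that $f(a)=0$ forces each $f_i(a)=0$. The additional remarks about the positive cone and the failure over $\C$ are fine commentary but add nothing beyond what the paper already notes.
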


\begin{proof}
  First rewrite $Z(S)$ as $Z(S')$ for some finite set $S'\subset \K[x_{1},\ldots ,x_{n}]$.
  Let $S'=\{f_{1},\ldots ,f_{k}\}$. If $f_{1}(a)= \ldots =f_{k}(a)=0$, then $f_{1}^{2}(a)+ \ldots +f_{k}^{2}(a)=0$ and 
  $Z(S')\subset Z(f^{2}_{1}+ \ldots +f^{2}_{k})$. Conversely,
  $f^{2}_{1}(a)+ \ldots +f^{2}_{k}(a)=0$ implies $f^{2}_{1}(a)= \ldots =f^{2}_{k}(a)=0$ which then entails
  $f_{1}(a)= \ldots =f_{k}(a)=0$. Hence $ Z(\{f_{1},\ldots , f_{k}\})=Z(f_{1}^{2}+ \ldots +f_{k}^{2})$.
\end{proof}

A set $V\subset \K^{n}$ is called an \emph{algebraic variety} or a \emph{variety} if $V=Z(S)$ for some set
$S\subset \K[x_{1},\ldots ,x_{n}]$. Depending on whether $\K=\R$ or $\K=\C$ the variety is called \emph{real}
or \emph{complex}. 

Similar to the zero-set map $Z$ one also has the \textit{vanishing ideal} map $J$ which maps a subset $A\subset \K^{n}$ to
the set of all polynomials vanishing on $A$ that is to the set 
\[ J(A)=J_\K(A)= \big\{ f\in\K[x_{1},\ldots ,x_{n}] \, \big| \: f(a)=0 \text{ for all } a\in A \big\} \ . \] 
To see why $J(A)$ is an ideal, note that if $f_{1}(a)=f_{2}(a)=0$ for $a\in A$,
then $f_{1}(a)\pm f_{2}(a)=0$ and $q(a)\cdot f_{1}(a)=0$ for any polynomial $q$.

Starting with an ideal $I\subset \K[x_{1},\ldots ,x_{n}]$, it is not necessarily the case that $I=J(Z(I))$. This is because there
might be other polynomials vanishing on $Z(I)$ which  were not included in $I$, so one can only guarantee that $I\subset J(Z(I))$.

\begin{example} Consider the ideal $\langle x^{2} \rangle\subset\mathbb{C}[x]$.
  Then $Z(\langle x^{2} \rangle)=\{0\}$. However, 
  \[ J(Z(\langle x^{2} \rangle))=J(\{0\})=\langle x\rangle \ , \]
  so $x$ is in $J(Z(\langle x^{2} \rangle))$ but not in $\langle x^{2} \rangle$.
\end{example}

\noindent
Over an algebraically closed field, such as $\mathbb{C}$, the missing polynomials are characterized by the \emph{radical} of $I$, defined as 
\[
  \sqrt{I} = \big\{ f \, \big| \: \exists r\in \N_{>0}  : \: 
  f^{r}\in I \big\} \ .
\]
Note that $\sqrt{I}$ is itself an ideal; see e.g.\ \cite[X,\,\S 2]{Lang:Algebra}. 

\begin{theorem}[Hilbert's Nullstellensatz] Over an algebraically closed field $\K$ the equality 
  \[ J(Z(I))= \sqrt{I} \]
  holds for every ideal $I\subset \K[x_{1},\ldots ,x_{n}]$. 
\end{theorem}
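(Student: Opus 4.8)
The plan is to prove the two inclusions $\sqrt I\subseteq J(Z(I))$ and $J(Z(I))\subseteq\sqrt I$ separately; the first is a one-line observation and essentially all the content lies in the second. For the first, if $f^r\in I$ for some $r\ge 1$ then $f^r$ vanishes on $Z(I)$, so $f(a)^r=0$ for every $a\in Z(I)$, and since $\K$ is a field (hence has no zero divisors) this forces $f(a)=0$; thus $f\in J(Z(I))$. This direction does not use that $\K$ is algebraically closed.

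For the reverse inclusion I would first isolate the \emph{weak Nullstellensatz}: every proper ideal $I\subsetneq\K[x_1,\dots,x_n]$ has nonempty zero set $Z(I)\neq\emptyset$. To prove it, enlarge $I$ to a maximal ideal $\mathfrak m$ (possible since the ring is Noetherian) and pass to the residue field $L=\K[x_1,\dots,x_n]/\mathfrak m$, which is finitely generated as a $\K$-algebra. The essential input is \emph{Zariski's lemma}: a field that is finitely generated as an algebra over a field $\K$ must be a finite, hence algebraic, extension of $\K$. Because $\K$ is algebraically closed this forces $L=\K$, so the quotient map is the identity on the constants and the images $a_i\in L=\K$ of the variables $x_i$ assemble into a point $a=(a_1,\dots,a_n)\in\K^n$. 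Every $f\in\mathfrak m$ then satisfies $f(a)=0$, and since $I\subseteq\mathfrak m$ we conclude $a\in Z(I)$.

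Finally I would deduce the full statement from the weak form by the \emph{Rabinowitsch trick}. Given $f\in J(Z(I))$, write $I=\langle h_1,\dots,h_k\rangle$ by Noetherianity, adjoin a new variable $y$, and set $\widetilde I=\langle h_1,\dots,h_k,\,1-yf\rangle\subseteq\K[x_1,\dots,x_n,y]$. Any common zero $(a,b)$ of $\widetilde I$ would force $a\in Z(I)$, hence $f(a)=0$, contradicting $1-bf(a)=1$; so $Z(\widetilde I)=\emptyset$ and, by the weak Nullstellensatz, $\widetilde I$ is the unit ideal. Writing $1=\sum_i g_i\,h_i+g_0\,(1-yf)$ with $g_i\in\K[x_1,\dots,x_n,y]$, substituting $y=1/f$ and clearing denominators by multiplying through by a sufficiently high power $f^N$ eliminates the last term and exhibits $f^N$ as a $\K[x_1,\dots,x_n]$-combination of the $h_i$, i.e.\ $f^N\in I$, so $f\in\sqrt I$.

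The main obstacle is Zariski's lemma, which is the only genuinely nontrivial commutative-algebra ingredient; everything else is formal manipulation. One convenient way to establish it is via Noether normalization: $L$ would be a finite module over a polynomial subring $\K[t_1,\dots,t_d]$, and an integral extension domain of $\K[t_1,\dots,t_d]$ can be a field only when $\K[t_1,\dots,t_d]$ is itself a field, i.e.\ when $d=0$, forcing $L$ to be algebraic over $\K$; alternatively one can run the direct Artin--Tate induction on the number of algebra generators. It is worth flagging that algebraic closedness is genuinely needed here: over the real field $\R$, which is the setting of the remainder of the paper, the analogous statement fails, and one must instead pass to the \emph{real radical} and invoke the Real Nullstellensatz.
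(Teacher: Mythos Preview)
The paper does not supply a proof of Hilbert's Nullstellensatz; it is stated as a classical background result and immediately followed by the definition of a radical ideal and the Real Nullstellensatz, so there is nothing to compare against on the paper's side.

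Your argument is the standard textbook proof and is correct. One small remark: when you say the proper ideal $I$ can be enlarged to a maximal ideal ``since the ring is Noetherian'', Noetherianity is not what is doing the work there; the existence of a maximal ideal containing a given proper ideal holds in any commutative ring with unit by Zorn's lemma. Noetherianity is genuinely used later, when you write $I=\langle h_1,\dots,h_k\rangle$ for the Rabinowitsch step. Also, in the Rabinowitsch trick the substitution $y=1/f$ already kills the term $g_0(1-yf)$; the subsequent multiplication by $f^N$ is only to clear the denominators introduced in the $g_i$. Your phrasing conflates these two effects slightly, but the mathematics is sound. Finally, it is worth handling the trivial case $f=0$ separately (then $f\in I\subseteq\sqrt I$ automatically), since otherwise the localization at $f$ is ill-defined. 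None of these points is a genuine gap.
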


An ideal $I$ which satisfies $I=\sqrt{I}$ is called a \emph{radical} ideal. Over algebraically closed fields, Hilbert's Nullstellensatz establishes a one-to-one correspondence between varieties and radical ideals in the polynomial ring. 
For ordered fields  there  is a similar but more subtle result. To formulate it we need some further notation. 
Let $(\K,\leq)$ be an ordered field.
The \emph{real radical} of an ideal $I \subset \K[x_{1},\ldots ,x_{n}]$ then is defined as
\[
  \realrad{}{I}=
  \big\{f \in \K[x_{1},\ldots ,x_{n}] \, \big| \: \exists 
  r \in \N_{>0}, \, f_{1},\ldots ,f_{k}\in \K[x_{1},\ldots ,x_{n}]  : \:
  f^{2r}+\sum_{i=1}^{k}f_{i}^{2}\in I\big\} \ .
\]
An ideal $I\subset \K[x_{1},\ldots ,x_{n}]$ satisfying $I=\realrad{}{I}$ is called \textit{real}.
If $(\F,\leq)$ denotes the real closure of the ordered field $(\K,\leq)$ and if $A$ is a subset of $\F^n$,
then the vanishing ideal of $A$ in  $\K[x_{1},\ldots ,x_{n}]$ is the ideal
\[ J_{\F} (A) \cap \K[x_{1},\ldots ,x_{n}] 
\ . \]
We denote this ideal by $J_{\K}(A)$.  

\begin{theorem}[Real Nullstellensatz {\cite[Thm.~2.8]{Neuhaus1998}}] Let $(\K,\leq)$ be an ordered field and $(\F,\leq)$ be its
  real closure. If $I\subset \K[x_{1},\ldots ,x_{n}]$ is an ideal, then  
  $J_{\K}(Z_{\F}(I))=\realrad{}{I}$. 
  In particular, if $S\subset \mathbb{Q}[x_{1},\ldots ,x_{n}]$, then 
  $J_\mathbb{Q}(Z_{\mathbb{R}_{\textup{alg}}}(S))=\realrad{\Q}{\langle S\rangle}$.
\end{theorem}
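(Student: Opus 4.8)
The plan is to prove the two inclusions making up $J_{\K}(Z_{\mathbb{L}}(I))=\realrad{\!\K\,}{I}$ separately, since one is elementary and the other carries essentially all the content. For $\realrad{\!\K\,}{I}\subseteq J_{\K}(Z_{\mathbb{L}}(I))$, suppose $f^{2r}+\sum_{i=1}^{k}f_{i}^{2}\in I$ and pick any $a\in Z_{\mathbb{L}}(I)\subseteq\mathbb{L}^{n}$; evaluating gives $f(a)^{2r}+\sum_{i}f_{i}(a)^{2}=0$ in $\mathbb{L}$, and since $\mathbb{L}$ is real closed it is formally real, so a vanishing sum of squares forces every summand to vanish, whence $f(a)=0$. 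As $f\in\K[x_{1},\ldots,x_{n}]$ this says $f\in J_{\K}(Z_{\mathbb{L}}(I))$.

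For the reverse inclusion I would argue by contraposition: assuming $f\notin\realrad{\!\K\,}{I}$, I will produce a point $b\in Z_{\mathbb{L}}(I)$ with $f(b)\neq 0$. First apply Zorn's lemma to the non-empty family of ideals $J$ with $I\subseteq J$ and $f\notin\realrad{\!\K\,}{J}$ (a union of a chain in this family is again in it, since any witness $f^{2r}+\sigma$ lying in the union already lies in one member), obtaining a maximal element $P$. Using only elementary identities for sums of squares — such as $(f^{2r}+\sigma)(f^{2s}+\sigma')=f^{2(r+s)}+\sigma''$ and $(f^{2r}+\sigma)^{2}=f^{4r}+\sigma'''$ with $\sigma,\sigma',\sigma'',\sigma'''$ sums of squares, together with the reduction modulo $P$ of a relation $f^{2r}+\sigma=p+ha$ with $p\in P$ obtained from maximality — one checks that $P$ must be a prime ideal satisfying $P=\realrad{\!\K\,}{P}$, still with $I\subseteq P$ and $f\notin P$. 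Then $D:=\K[x_{1},\ldots,x_{n}]/P$ is a domain finitely generated over $\K$, and realness of $P$ says precisely that $-1$ is not a sum of squares in the fraction field $F:=\operatorname{Frac}(D)$; by Artin--Schreier, $F$ carries a field order, and — using that positive elements of $\K$ are sums of squares, which holds for $\K=\Q$ and $\K=\R_{\textup{alg}}$ and in general calls for the positive-cone-weighted version of the real radical — this order may be taken to extend $\leq_{\K}$. Letting $\Omega$ be the real closure of $F$ for this order, essential uniqueness of real closures embeds $\mathbb{L}$ into $\Omega$ over $\K$, and the tuple $a:=(\bar{x}_{1},\ldots,\bar{x}_{n})\in F^{n}\subseteq\Omega^{n}$ satisfies $g(a)=0$ for all $g\in I$ (as $I\subseteq P$) while $f(a)\neq 0$ (as $f\notin P$).

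It remains to descend this solution from $\Omega$ to $\mathbb{L}$. Choosing $g_{1},\ldots,g_{s}$ with $I=\langle g_{1},\ldots,g_{s}\rangle$ (possible by noetherianity of the polynomial ring), the point $a$ witnesses that the first-order formula $g_{1}=\cdots=g_{s}=0\ \wedge\ f\neq 0$, whose parameters lie in $\K\subseteq\mathbb{L}$, is satisfiable in the real closed extension $\Omega\supseteq\mathbb{L}$; by model-completeness of the theory of real closed fields (the Tarski--Seidenberg transfer principle) it is then satisfiable by some $b\in\mathbb{L}^{n}$. Such $b$ lies in $Z_{\mathbb{L}}(\langle g_{1},\ldots,g_{s}\rangle)=Z_{\mathbb{L}}(I)$ and has $f(b)\neq 0$, so $f\notin J_{\K}(Z_{\mathbb{L}}(I))$. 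The ``in particular'' assertion is then the special case $\K=\Q$, $I=\langle S\rangle_{\Q}$, $\mathbb{L}=\R_{\textup{alg}}$, using that $\R_{\textup{alg}}$ is the real closure of $(\Q,\leq)$ and that $Z(S)=Z(\langle S\rangle)$.

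I expect the descent to be the crux: the Zorn argument and the sum-of-squares bookkeeping are routine once organized, whereas the passage from $\Omega$ back to $\mathbb{L}$ rests on the Tarski--Seidenberg principle — equivalently on the Artin--Lang homomorphism theorem — which is exactly the input that distinguishes real closed fields and has no counterpart over $\C$, consistent with the observation that neither Proposition~\ref{prop:real-variety-zero-set-single-polynomial} nor this theorem has a complex analogue.
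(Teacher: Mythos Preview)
The paper does not supply its own proof of this theorem; it is quoted from \cite{Neuhaus1998}. Your outline is the standard route to the Real Nullstellensatz --- the easy inclusion via formal reality of $\mathbb{L}$, reduction by Zorn to a real prime $P\supseteq I$ with $f\notin P$, passage to an ordered extension of $\K$ via Artin--Schreier, and descent from the abstract real closure $\Omega$ back to $\mathbb{L}$ by Tarski--Seidenberg (equivalently, the Artin--Lang homomorphism theorem). The bookkeeping you sketch for primeness and realness of $P$ is correct once one uses that products and even powers of sums of squares are again sums of squares, and that $\realrad{\!\K\,}{\cdot}$ is idempotent.

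You have also put your finger on a genuine issue with the general statement as the paper writes it. The real radical defined here uses only \emph{unweighted} sums of squares and hence does not depend on the chosen ordering $\leq$ of $\K$, whereas the left-hand side $J_{\K}(Z_{\mathbb{L}}(I))$ manifestly does through $\mathbb{L}$. Your proof needs the order on $F=\operatorname{Frac}(\K[x_1,\dots,x_n]/P)$ to restrict to $\leq$ on $\K$, and that is guaranteed precisely when every $\leq$-positive element of $\K$ is a sum of squares --- true for $\Q$ and $\R_{\textup{alg}}$, but not in general. Indeed, for $\K=\Q(t)$ ordered with $t>0$ and $I=(x^{2}+t)\subset\K[x]$ one has $Z_{\mathbb{L}}(I)=\emptyset$, yet $1\notin\realrad{\!\K\,}{I}$: any relation $1+\sigma\in(x^{2}+t)$ with $\sigma$ a sum of squares would, evaluated at a root of $x^{2}+t$ in the real closure for the \emph{other} ordering of $\Q(t)$ (where $t<0$), force $1+\text{(nonnegative)}=0$. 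So the equality fails there, and the correct general statement requires the order-weighted radical you allude to. Since the paper only ever invokes the case $\K=\Q$, nothing downstream is affected, and for that case your argument is complete.
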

Note that $Z_{\F}(I)$ is the variety in $\F^{n}$ obtained by viewing $I$ as a set of polynomials
in $\F[x_{1},\ldots ,x_{n}]$, and that $J_{\K}(Z_{\F}(I))$ is the ideal of $\K$-polynomials vanishing on
$Z_{\F}(I)$.  

\begin{remark}
  We are ultimately interested in varieties over $\mathbb{R}$, but for computational purposes we restrict to varieties 
  in $\mathbb{R}^{n}_{\textup{alg}}$ defined by sets of polynomials $S\subset \mathbb{Q}[x_{1},\ldots ,x_{n}]$. 
  Therefore,  we will mostly work over ideals $I\leq \mathbb{Q}[x_{1},\ldots ,x_{n}]$.
\end{remark}

The following density result is concerned with the metric given by Eq.~\eqref{eq:metric-real-closed-field} and 
is an immediate consequence of \cite[Prop.\ 5.3.5]{BocCosRoyRAG}.

\begin{theorem}
  If $\K\subset \F$ are both real closed fields subfields of $\mathbb{R}$ and
  $S\subset \K[x_{1},\ldots ,x_{n}],$ then
  $Z_{\K}(S)$, viewed as a subset of $\F^{n}$, is dense in $Z_{\F}(S)$ with respect to the metric
  $\|\cdot \|_{2}$.
  In particular, if $S\subset \mathbb{Q}[x_{1},\ldots ,x_{n}]$, then $Z_{\mathbb{R}_{\textup{alg}}}(S)$ is dense in
  $Z_{\mathbb{R}}(S)$.
\end{theorem}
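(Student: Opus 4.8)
The plan is to reduce to a single defining equation and then invoke the Tarski–Seidenberg transfer principle for real closed fields, the point being to rephrase ``$Z_\K(S)$ is dense in $Z_\mathbb{L}(S)$'' in such a way that the point we want to approximate is \emph{not} a parameter of any formula over $\K$. Since $\K\subset\R$, Proposition~\ref{prop:real-variety-zero-set-single-polynomial} produces a single polynomial $f\in\K[x_1,\dots,x_n]$ with $Z_\K(S)=Z_\K(f)$; unwinding its proof, $f=f_1^2+\dots+f_k^2$ with $\langle f_1,\dots,f_k\rangle_\K=\langle S\rangle_\K$, and since this ideal identity, like the sum‑of‑squares identity, persists verbatim over $\mathbb{L}$, one also gets $Z_\mathbb{L}(S)=Z_\mathbb{L}(f)$ for the same $f$. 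So it suffices to show that $Z_\K(f)$ is dense in $Z_\mathbb{L}(f)$ with respect to $\|\cdot\|_2$.

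Fix $b\in Z_\mathbb{L}(f)$ and a rational $\eta>0$ (rationals suffice since $\Q\subset\mathbb{L}\subset\R$), put $\varepsilon=\eta/2$, and let $\varphi(x_1,\dots,x_n)$ be the formula $\exists y_1\cdots\exists y_n\big(f(y)=0\ \wedge\ \sum_{i=1}^n(x_i-y_i)^2<\varepsilon^2\big)$, whose parameters — the coefficients of $f$ and the rational constant $\varepsilon^2$ — all lie in $\K$. Consider the ``$\varepsilon$‑tube''
\[
  T=\{\,a\in\K^n \mid \K\models\varphi(a)\,\},
\]
a semialgebraic subset of $\K^n$. Because the theory of real closed fields is model complete (Tarski), the inclusion $\K\subseteq\mathbb{L}$ of real closed fields is elementary, so for every $a\in\K^n$ we have $\K\models\varphi(a)$ iff $\mathbb{L}\models\varphi(a)$. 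In other words $T=\K^n\cap T_\mathbb{L}$, where $T_\mathbb{L}=\{a\in\mathbb{L}^n\mid\mathbb{L}\models\varphi(a)\}=\bigcup_{y\in Z_\mathbb{L}(f)}\{a\in\mathbb{L}^n\mid\|a-y\|_2<\varepsilon\}$ is the same tube computed over $\mathbb{L}$.

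The argument then closes geometrically. Since $b\in Z_\mathbb{L}(f)$, the open $\|\cdot\|_2$‑ball of radius $\varepsilon$ about $b$ is one of the balls whose union is $T_\mathbb{L}$, hence is contained in $T_\mathbb{L}$. As $\Q$ is dense in $\R$ and $b\in\mathbb{L}^n\subset\R^n$, choose $a\in\Q^n\subset\K^n$ with $\|a-b\|_2<\varepsilon$; then $a\in T_\mathbb{L}\cap\K^n=T$, so by the definition of $T$ there is $y\in Z_\K(f)$ with $\|a-y\|_2<\varepsilon$, and the triangle inequality gives $\|b-y\|_2\le\|b-a\|_2+\|a-y\|_2<2\varepsilon=\eta$. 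Since $b\in Z_\mathbb{L}(f)$ and $\eta>0$ were arbitrary, $Z_\K(f)$ is dense in $Z_\mathbb{L}(f)$. The last sentence of the theorem is the instance $\K=\R_{\textup{alg}}$, $\mathbb{L}=\R$, using $\Q[x_1,\dots,x_n]\subset\R_{\textup{alg}}[x_1,\dots,x_n]$.

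The crux — and the reason for not arguing directly — is that the target point $b$ lives in $\mathbb{L}^n$ and not in $\K^n$, so the naive assertion ``there is a $\K$‑point within $\eta$ of $b$'' cannot be transferred, $b$ not being available as a parameter over $\K$. Trading it for the statement $b\in T_\mathbb{L}$ about an open set $T_\mathbb{L}$ that \emph{is} defined over $\K$ is exactly what makes the transfer principle applicable; everything after that is the triangle inequality together with the density of $\Q$ in $\R$. Alternatively, one may simply quote \cite[Prop.~5.3.5]{BocCosRoyRAG} applied to the (semi)algebraic set $Z_\K(f)$.
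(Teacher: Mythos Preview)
Your argument is correct. The paper does not actually prove this theorem: it simply states that the result ``is an immediate consequence of \cite[Prop.~5.3.5]{BocCosRoyRAG}'' and moves on. What you have written is essentially a self-contained proof of that cited proposition specialized to the algebraic set $Z_\K(f)$ --- the transfer principle for real closed fields applied to the $\varepsilon$-tube, followed by the triangle-inequality trick via a rational approximation of $b$. Your final sentence already acknowledges this equivalence. The preliminary reduction to a single polynomial via Proposition~\ref{prop:real-variety-zero-set-single-polynomial} is not strictly needed (the cited proposition covers arbitrary semialgebraic sets), but it does no harm and keeps the formula $\varphi$ simple.
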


Intuitively, this means that we can approximate points in $Z_{\mathbb{R}}(S)$ arbitrarily closely by points in 
$Z_{\mathbb{R}_{\textup{alg}}}(S)$. If $f$ vanishes on $Z_{\mathbb{R}_{\textup{alg}}}(S)$ then since $f$ continuous 
on $\mathbb{R}^{n}$ and $Z_{\mathbb{R}_{\textup{alg}}}(S)$ is dense in $Z_{\mathbb{R}}(S)$, $f$ must also vanish on 
$Z_{\mathbb{R}}(S)$. Therefore, by the real Nullstellensatz 
\[ \realrad{\Q}{\langle S\rangle}=J_{\mathbb{Q}}(Z_{\mathbb{R}_{\textup{alg}}}(S))=J_{\mathbb{Q}}(Z_{\mathbb{R}}(S)) \ . \]

\subsection{Properties of Algebraic Varieties}
Real varieties inherit topological and differential structures based on their embedding in
$\mathbb{R}^{n}$ and form a very large class of spaces. For example, a famous result due to John Nash \cite{nash1952} says that
every closed manifold is diffeomorphic to a real algebraic variety. Moreover, by the fundamental work \cite{Whitney65}
of Hassler Whitney, every algebraic variety carries a unique stratification which is minimal among all stratifications fullfilling Whitney's condition (b).
This means that every algebraic variety can be decomposed into
finitely many pairwise disjoint smooth manifolds called \emph{strata} such that two natural regularity conditions are
fulfilled, namely the condition of frontier and Whitney's condition (b). Moreover, there is a smallest among all
such decompositions which we call the \emph{canonical stratification} of an algebraic variety.
In this paper, we do not need the subtleties of the construction of
the canonical stratification of an algebraic variety, just its existence,
and therefore refer the interested reader to \cite[Chpt.\ 1]{PflAGSSS} for the
technical details on stratified spaces, the condition of frontier and the Whitney conditions.
Since manifolds of different dimensions may appear in the canonical stratification, varieties can also possess singularities
and show non-smooth behavior.
This makes the variety hypothesis, i.e.\ the claim that the underlying space of the data is an 
algebraic variety, a very reasonable assumption especially for scientific and computational purposes. 

\begin{example}
\label{ex:union-sphere-plane}  
  Consider the sphere of radius $\frac{1}{2}$ centered at $(\frac{1}{2},\frac{1}{2},\frac{1}{2})$ and the plane $x-y=0$. Their union is
  a variety which we can represent as the zero set of the polynomial
  \[
    f (x,y,z)= \left( (x-\frac{1}{2})^{2}+(y-\frac{1}{2})^{2}+(z-\frac{1}{2})^{2}-\frac{1}{4}\right)\cdot(x-y) \ .
  \]
  The variety $V= Z(f)$ is illustrated in Figure \ref{fig:zerolocus}.
  Its singular points are the points in the intersection of the sphere with the plane. Note that any point in the intersection
  must satisfy $y=x$, so one may set $y=x$ in the equation of the sphere and then observes that the singular locus of
  the variety $V$ is given by the circle $Z(\langle 2x^{2}-2x+z^{2}-z+\frac{1}{2}\rangle)\cap Z(\langle x-y\rangle)$
  illustrated in Figure \ref{fig:singularlocus}.
  
\begin{figure}[H]
 \centering
 \begin{subfigure}{.45\textwidth}\centering
 \includegraphics[scale=.3]{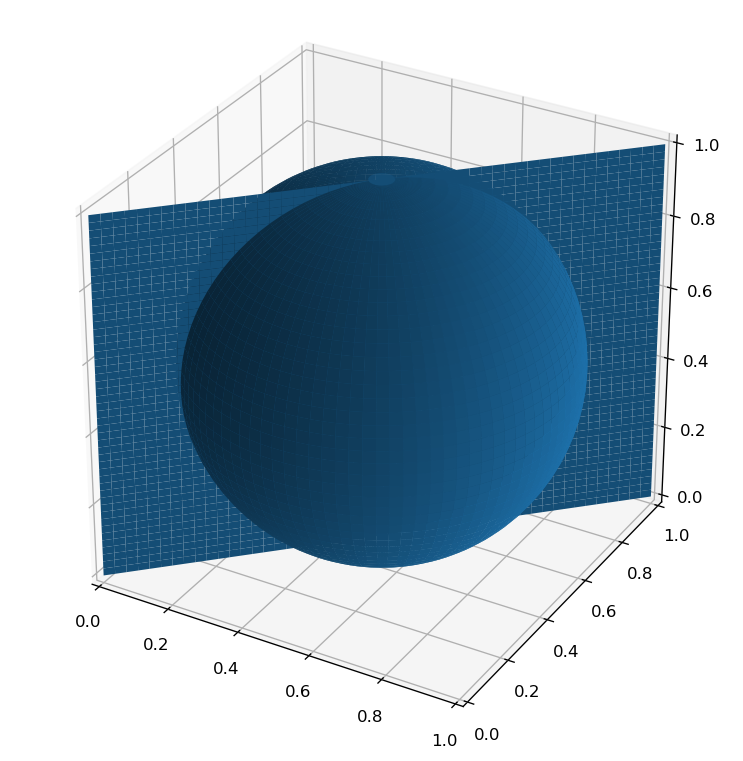}
 \caption{Zero locus of $f$}\label{fig:zerolocus}
 \end{subfigure}
 \begin{subfigure}{.45\textwidth}\centering
 \includegraphics[scale=.3]{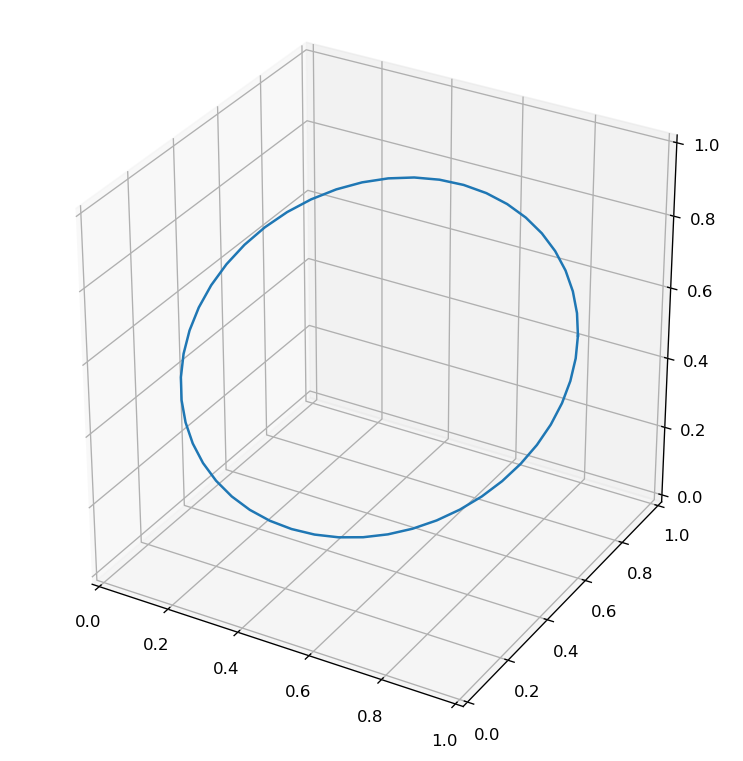}
 \caption{Singular locus}\label{fig:singularlocus}
\end{subfigure} \caption{The union of a sphere and a plane as a variety}\label{fig:varietyunionsphereplane}
\end{figure}
\end{example}

In what follows we review the concepts of dimension, the Jacobians, and the singular locus of a variety.
Let $V$ be a variety and let $J(V)\subset \K[x_{1},\ldots ,x_{n}]$ be its vanishing ideal.
The \emph{coordinate ring} of $V$, denoted $\K(V)$, is the quotient ring $\K[x_{1},\ldots ,x_{n}]/J(V)$. It
contains all the information about the variety $V$. In fact, $V$ can be reconstructed from $\K(V)$
using its spectrum; see \cite{HarAG}. 
The \emph{Krull dimension} of a commutative ring $R$, denoted $\dim (R)$, is the length $n$ of the longest
chain $\langle 0\rangle= P_{0}\lneq P_{1}\lneq  \ldots \lneq P_{n}=\K(V)$ of prime ideals in $\K(V)$. 
In case the variety is smooth, the Krull dimension of its coordinate ring coincides with its dimension 
as a real or complex manifold, respectively. 
The following result gives a geometric interpretation of the Krull dimension of a coordinate ring.
It is an immediate consequence of \cite[Prop.~2.8.5]{BocCosRoyRAG}.

\begin{proposition}
  For every real algebraic variety $V$,
  $\dim (\mathbb{R}(V))=\dim(V)$, where
  $\dim (V)$ is the maximal dimension of the manifolds in the canonical Whitney
  stratification of $V$. 
\end{proposition}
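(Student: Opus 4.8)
The plan is to chain together three descriptions of dimension. First I would unwind the definitions: since $V$ is Zariski closed with real vanishing ideal $J(V)$, the coordinate ring $\mathbb{R}(V)=\mathbb{R}[x_{1},\ldots,x_{n}]/J(V)$ is a reduced finitely generated $\mathbb{R}$-algebra, and $\dim(\mathbb{R}(V))$ denotes its Krull dimension. By the cited \cite[Prop.~2.8.5]{BocCosRoyRAG} this Krull dimension equals the geometric dimension of $V$ in the sense of semialgebraic geometry; concretely, it equals the largest integer $d$ such that $V$ contains a semialgebraic subset which is a $d$-dimensional smooth submanifold of $\mathbb{R}^{n}$ (equivalently, semialgebraically homeomorphic to an open box $(0,1)^{d}$). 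Call this quantity $d_{\mathrm{sa}}(V)$. So the first reduction is $\dim(\mathbb{R}(V))=d_{\mathrm{sa}}(V)$, which is exactly the content imported from \cite{BocCosRoyRAG}.

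Next I would bring in Whitney's theorem, already invoked in the text: $V$ admits its canonical minimal Whitney stratification, a decomposition $V=\bigsqcup_{\alpha\in\Lambda}S_{\alpha}$ into finitely many pairwise disjoint smooth submanifolds $S_{\alpha}\subseteq\mathbb{R}^{n}$, each of which is semialgebraic. By definition the right-hand side of the proposition is $\dim(V)=\max_{\alpha}\dim S_{\alpha}$. To close the argument I would use two standard facts from the semialgebraic dimension theory of \cite[\S 2.8]{BocCosRoyRAG}: (i) a semialgebraic set that is a smooth submanifold of dimension $d$ has semialgebraic dimension $d$, so $d_{\mathrm{sa}}(S_{\alpha})=\dim S_{\alpha}$; and (ii) semialgebraic dimension is monotone and behaves additively on finite unions, i.e.\ $d_{\mathrm{sa}}\big(\bigcup_{i}A_{i}\big)=\max_{i} d_{\mathrm{sa}}(A_{i})$. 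Applying (ii) to the (finite) stratification and then (i) to each stratum gives $d_{\mathrm{sa}}(V)=\max_{\alpha} d_{\mathrm{sa}}(S_{\alpha})=\max_{\alpha}\dim S_{\alpha}=\dim(V)$. Combining with the first reduction yields $\dim(\mathbb{R}(V))=d_{\mathrm{sa}}(V)=\dim(V)$, as claimed.

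The step I expect to require the most care is the bookkeeping one: reconciling the differential-geometric definition of $\dim(V)$ used in the statement (via the Whitney stratification) with the semialgebraic notion of dimension that \cite[Prop.~2.8.5]{BocCosRoyRAG} actually governs. This comes down to noting that the strata of the canonical minimal Whitney stratification of a real algebraic variety are genuinely semialgebraic sets — so that the dimension theory of \cite{BocCosRoyRAG} applies to them — and that there are only finitely many of them, so that the union formula (ii) is available; both are classical facts about real algebraic varieties (see \cite{Whitney65,PflAGSSS}) but should be stated explicitly. Everything of substance — that Krull dimension of the coordinate ring agrees with the geometric/semialgebraic dimension, and that the latter is the maximum over the manifold pieces — is inherited directly from \cite{BocCosRoyRAG}, which is why the proposition can fairly be called an immediate consequence of \cite[Prop.~2.8.5]{BocCosRoyRAG}.
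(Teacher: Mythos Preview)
Your proposal is correct and follows the same route as the paper, which offers no proof beyond the remark that the proposition is an immediate consequence of \cite[Prop.~2.8.5]{BocCosRoyRAG}; you have simply unpacked that remark by identifying the Krull dimension with the semialgebraic dimension via the cited result and then matching the semialgebraic dimension with the maximal stratum dimension using the finiteness and semialgebraicity of the canonical Whitney stratification. There is nothing to add.
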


Even though a variety $V$ is completely characterized by the ideal $J_{\mathbb{R}}(Z_{\mathbb{R}}(S))$,
the subset $\realrad{}{\langle S\rangle}$
still carries useful information.

\begin{proposition}\label{prop:dimension-geometric-algebraic}
  If $S\subset \mathbb{Q}[x_{1},\ldots ,x_{n}]$, then
  \[
    \dim (Z_{\mathbb{R}}(S))=\dim(\mathbb{R}_{\textup{alg}}(Z_{\mathbb{R}_{\textup{alg}}}(S)))=
    \dim (\mathbb{Q}[x_{1},\ldots ,x_{n}]/\realrad{\Q}{\langle S\rangle}) \ .
  \]  
\end{proposition}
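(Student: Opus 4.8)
The plan is to peel all three numbers down, using the already-established identification of Krull dimension with geometric dimension over a real closed field, to one statement about a single real prime ideal, and then to prove that statement by a short base-change argument for one inequality and the implicit function theorem for the other. Write $R_{\K}:=\K[x_{1},\ldots,x_{n}]$ and set $I:=\realrad{\Q}{\langle S\rangle_{\Q}}\subseteq R_{\Q}$; recall that the Real Nullstellensatz gives $I=J_{\Q}(Z_{\R_{\textup{alg}}}(S))$ (equivalently, by the density theorem, $=J_{\Q}(Z_{\R}(S))$). By definition the third number is $\dim(R_{\Q}/I)$; the second is $\dim\big(R_{\R_{\textup{alg}}}/J_{\R_{\textup{alg}}}(Z_{\R_{\textup{alg}}}(S))\big)$, the Krull dimension of the coordinate ring of $Z_{\R_{\textup{alg}}}(S)$; and by the Proposition above relating the Krull dimension of a real coordinate ring to the dimension of the variety — which, by its proof, is valid over an arbitrary real closed field — the second and first numbers equal the semialgebraic dimensions of $Z_{\R_{\textup{alg}}}(S)$ and $Z_{\R}(S)$ respectively. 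Hence it suffices to prove $\dim Z_{K}(S)=\dim(R_{\Q}/I)$ for $K=\R$ and $K=\R_{\textup{alg}}$, the left-hand dimension being geometric.

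Since $I$ is a real radical ideal, it is the intersection of its finitely many minimal primes $\mathfrak{p}_{1},\ldots,\mathfrak{p}_{r}\subseteq R_{\Q}$, each of which is again a real prime (a standard fact of real algebra), so $\dim(R_{\Q}/I)=\max_{i}\dim(R_{\Q}/\mathfrak{p}_{i})$; if $r=0$ the identity is trivial, so assume $r\ge 1$. Over the real closed field $K$ one has $Z_{K}(S)=Z_{K}(I)=\bigcup_{i}Z_{K}(\mathfrak{p}_{i})$ — using that in $K$ a sum of squares vanishes only if every summand does — so $\dim Z_{K}(S)=\max_{i}\dim Z_{K}(\mathfrak{p}_{i})$. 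Everything therefore reduces to the following claim: for a real prime $\mathfrak{p}\subseteq R_{\Q}$ and any real closed field $K\supseteq\Q$, one has $\dim Z_{K}(\mathfrak{p})=\dim(R_{\Q}/\mathfrak{p})=:d$.

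The inequality ``$\le$'' is easy. By the Proposition over $K$ and the Real Nullstellensatz over $K$ (which is its own real closure), $\dim Z_{K}(\mathfrak{p})=\dim\big(R_{K}/\realrad{K}{\mathfrak{p}R_{K}}\big)\le\dim(R_{K}/\mathfrak{p}R_{K})=\dim\big((R_{\Q}/\mathfrak{p})\otimes_{\Q}K\big)$, and the Krull dimension of a finitely generated algebra over a field does not change under extension of the base field: Noether normalization presents $R_{\Q}/\mathfrak{p}$ as a module-finite extension of a polynomial subring $\Q[y_{1},\ldots,y_{d}]$, and after applying $-\otimes_{\Q}K$ one still has a module-finite extension of $K[y_{1},\ldots,y_{d}]$, injective since $K$ is flat over $\Q$; hence the last displayed dimension equals $d$.

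The inequality ``$\ge$'' is the heart of the argument and the step I expect to be the main obstacle. Because $\mathfrak{p}$ is real, the Real Nullstellensatz gives $J_{\Q}(Z_{\R_{\textup{alg}}}(\mathfrak{p}))=\realrad{\Q}{\mathfrak{p}}=\mathfrak{p}$, so $Z_{\R_{\textup{alg}}}(\mathfrak{p})$ is nonempty and cannot be contained in any proper $\Q$-closed subvariety of the $d$-dimensional (reduced, equidimensional) variety $Z_{\overline{\Q}}(\mathfrak{p})$; in particular it is not contained in the singular locus $\Sigma$, a proper closed subvariety defined over $\Q$ (cut out by $\mathfrak{p}$ together with suitable minors of a Jacobian matrix). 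Pick $a\in Z_{\R_{\textup{alg}}}(\mathfrak{p})\setminus\Sigma$; at $a$ the Jacobian of a finite generating set of $\mathfrak{p}$ has rank $n-d$, so by the implicit function theorem over the real closed field $K$ (note $a\in\R_{\textup{alg}}^{n}\subseteq K^{n}$) the set $Z_{K}(\mathfrak{p})$ agrees near $a$ with a $d$-dimensional $K$-manifold, whence $\dim Z_{K}(\mathfrak{p})\ge d$. This establishes the claim and hence the Proposition. The care is concentrated in this last paragraph — confirming that $\Sigma$ is genuinely $\Q$-defined, proper, and of dimension $<d$ (using equidimensionality of $Z_{\overline{\Q}}(\mathfrak{p})$ and generic smoothness in characteristic zero), and invoking the implicit/Nash function theorem in the real closed rather than the classical setting; once the Proposition relating Krull and geometric dimension is granted over a general real closed field, the rest is formal.
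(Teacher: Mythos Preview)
Your argument is correct and substantially more self-contained than the paper's. The paper disposes of the statement in three lines: it cites a result of Becker as a black box for the equality of the second and third quantities, then invokes invariance of Krull dimension under field extension (citing Hartshorne) to pass from the coordinate ring over $\mathbb{R}_{\textup{alg}}$ to that over $\mathbb{R}$, and finally applies the preceding Proposition to equate Krull and geometric dimension over $\mathbb{R}$. You instead unwind what lies behind those citations: reducing via the minimal real primes to a single real prime $\mathfrak{p}\subset R_{\Q}$, bounding $\dim Z_K(\mathfrak{p})$ from above by Noether normalization (which is exactly the content of the Hartshorne exercise) and from below by using the Real Nullstellensatz to locate a smooth real-algebraic point and then applying the implicit function theorem over $K$. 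The paper's route is shorter on the page but opaque; yours makes visible precisely where realness of $\mathfrak{p}$ enters---to guarantee a real point off the $\Q$-defined singular locus---and why the geometric dimension cannot drop, at the cost of the equidimensionality and Jacobian-rank verifications you correctly flag in your final paragraph.
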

\begin{proof}
  By \cite[Prop.~7]{Becker1993},
  $\dim (\mathbb{R}_{\textup{alg}}(Z_{\mathbb{R}_{\textup{alg}}}(S))) = \dim(\mathbb{Q}[x_{1},\ldots ,x_{n}]/\realrad{}{\langle S\rangle})$.
  Furthermore, the Krull dimension is preserved under field extensions, see e.g.\ \cite[II.~Ex.~3.20]{HarAG}.
  Hence $\dim(\mathbb{R}_{\textup{alg}}(Z_{\mathbb{R}_{\textup{alg}}}(S)))=\dim(\mathbb{R}(Z_{\mathbb{R}}(S)))$,
  which coincides with  $\dim(Z_{\mathbb{R}}(S))$.
\end{proof}

Related to the dimension are the singularities of a variety $V\subset\K^{n}$.
Express $J(V)=\langle f_{1},\ldots ,f_{k}\rangle$ with appropriate polynomials
$f_{1},\ldots, f_{k}\in \K [x_1,\ldots ,x_n]$. 
The \emph{Jacobian} of $V$ at the point $a\in V$ then is given by the matrix
\[
  \operatorname{Jac}_a (f_{1},\ldots ,f_{k})=\begin{pmatrix}
\partial_{x_{1}} f_{1}(a) &\dots&\partial_{x_{n}} f_{1}(a)\\
\vdots & \ddots & \\
\partial_{x_{1}} f_{k}(a) & &\partial_{x_{n}} f_{k}(a) 
\end{pmatrix} \ .
\]

\noindent
A \emph{singular point} or \emph{singularity} of $V$ now is a point $a\in V$ such that
\begin{equation*}
  \operatorname{rk} \big( \operatorname{Jac}_a (f_{1},\ldots ,f_{k}) \big) < n- \dim(V) \ .
\end{equation*}
Note that this condition does not depend on the choice of generators $f_{1},\ldots ,f_{k}$
for $J(V)$.
Conversely, a \emph{non-singular} point is a point $a\in V$ where
\[ \operatorname{rk} \big( \operatorname{Jac}_a(f_{1},\ldots ,f_{k})\big) = n-\dim(V) \ . \]
The set of all singular points of $V$ is called the \textit{singular locus} of $V$ and is denoted
$\operatorname{Sing} (V)$. One immediately checks that $\operatorname{Sing} (V)$ is 
again a variety by observing that $\operatorname{rk}\big(\operatorname{Jac}_a(f_{1},\ldots ,f_{k}\big))<n-r$
if and only if all $(n-r)$-minors of $\operatorname{Jac}_a(f_{1},\ldots ,f_{k})$ vanish.

For our purposes, it will be more convenient to use the following charactarization of
singularities which follows from \cite[Prop.~3.3.10]{BocCosRoyRAG}. 

\begin{theorem}\label{thm:singular-locus-hypersurface}
If $V\subset\mathbb{R}^{n}$ is a variety such that 
\begin{enumerate}[(i)]
    \item $\dim(V)=n-1$, and 
    \item $V=Z(f)$ for an irreducible polynomial $f$,
\end{enumerate}
then $a\in V$ is a singular point if and only if
$\big(\frac{\partial f}{\partial x_{1}}(a),\ldots ,\frac{\partial f}{\partial x_{n}}(a)\big)=
(0,\ldots ,0)$.
\end{theorem}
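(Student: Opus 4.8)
The plan is to deduce this from the general definition of singularity given just above, by showing that for a hypersurface $V = Z(f)$ with $f$ irreducible and $\dim(V) = n-1$, the vanishing ideal $J(V)$ is exactly $\langle f \rangle$, so that the general rank condition collapses to the vanishing of the single-row Jacobian, which is precisely the gradient $\big(\tfrac{\partial f}{\partial x_1}(a),\ldots,\tfrac{\partial f}{\partial x_n}(a)\big)$.

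First I would establish that $J(V) = \langle f \rangle$. Since $V = Z(f)$, we have $\langle f\rangle \subseteq J(V)$; the reverse inclusion is where the hypotheses are used. Because $\K = \R$ here, I would invoke the characterization of the vanishing ideal of a real hypersurface: by \cite[Prop.~3.3.10]{BocCosRoyRAG}, if $f$ is irreducible and $Z_{\R}(f)$ has dimension $n-1$ (equivalently, $f$ changes sign, so $\langle f\rangle$ is a real ideal), then $J_{\R}(Z_{\R}(f)) = \langle f\rangle$. With $J(V) = \langle f\rangle$, the general singularity criterion states that $a \in V$ is singular precisely when $\operatorname{rk}\big(\operatorname{Jac}_a(f)\big) < n - \dim(V) = n - (n-1) = 1$. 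Since $\operatorname{Jac}_a(f)$ is the single row $\big(\partial_{x_1}f(a),\ldots,\partial_{x_n}f(a)\big)$, its rank is $<1$ if and only if it is the zero row, i.e.\ all partial derivatives of $f$ vanish at $a$. That gives both directions of the stated equivalence.

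I would then remark that the condition is independent of the chosen generating set, as already noted in the general definition, so writing $J(V) = \langle f\rangle$ with this particular $f$ is legitimate; and that over $\R$ the hypothesis $\dim(V) = n-1$ is exactly what guarantees $\langle f\rangle$ is real and hence radical, which is what makes $J(V) = \langle f\rangle$ hold (this is the subtlety that fails over general fields and is the reason the hypothesis is stated). The main obstacle is this identification $J(V) = \langle f\rangle$: over $\R$ one must be careful, since e.g.\ $x^2 + y^2$ is irreducible in $\R[x,y]$ but $Z_\R(x^2+y^2) = \{0\}$ has dimension $0 \neq n-1$, and $J(\{0\}) = \langle x,y\rangle \neq \langle x^2+y^2\rangle$ — so the dimension hypothesis is essential, and the clean reduction only works once \cite[Prop.~3.3.10]{BocCosRoyRAG} is correctly applied. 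Everything after that is the one-line rank computation.
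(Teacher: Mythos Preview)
Your proposal is correct and matches the paper's approach exactly: the paper does not give a proof at all but simply states that the result ``follows from \cite[Prop.~3.3.10]{BocCosRoyRAG}'', and your argument is precisely the unpacking of that citation --- identifying $J(V)=\langle f\rangle$ via that proposition and then reducing the rank condition to the vanishing of the gradient. Your discussion of why the dimension hypothesis is essential (the $x^2+y^2$ example) is a nice addition that the paper omits.
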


By subvariety of a variety $V$ one understands a subset $W\subset V$ which is itself a variety.
The subvariety is \emph{proper} if $W\subsetneq V$. 
A variety $V$ is called \emph{reducible} if it is the union of two or more proper non-empty subvarieties, 
otherwise it is called \emph{irreducible}. This suggests decomposing varieties into their irreducible components 
i.e.~expressing $V$ as a union $V_{1}\cup \ldots \cup V_{s}$ where each $V_{i}$ is irreducible. The irreducible 
decomposition of a variety $V$ is finite and unique. It can be obtained by finding the minimal prime ideals 
$P_{1},\ldots ,P_{s}$ over $J(V)$. In this case, the irreducible decomposition of $V$ is given by 
$V =Z(P_{1}) \cup \ldots \cup  Z(P_{s})$.

\begin{proposition}[cf.~{\cite[Thm 2.8.3.]{BocCosRoyRAG}}]
  \label{prop:irreducible-decomposition}
  Given $S\subset \mathbb{Q}[x_{1},\ldots ,x_{n}]$ let $V=Z_{\mathbb{R}}(S)$.
  If $P_{1},\ldots ,P_{s}$ are the minimal prime ideals over $\realrad{}{\langle S \rangle}$, then each prime ideal 
  $P_{i}$ is equal to $J_\mathbb{Q}(V_{i})$ for some variety $V_{i}\subset\mathbb{R}^{n}$. 
  Furthermore, $V_{1}\cup \ldots \cup V_{s}$ is the irreducible decomposition of $V$.
\end{proposition}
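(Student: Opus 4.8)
The plan is to reduce the statement to the Real Nullstellensatz and the density theorem recorded above, combined with the purely algebraic fact that the minimal primes lying over a real ideal are themselves real, and then to translate the resulting algebra back into geometry. Write $I := \realrad{\Q}{\langle S\rangle_\Q}$. By the Real Nullstellensatz together with the density theorem---exactly the computation displayed just after the latter---one has $I = J_{\mathbb{Q}}(Z_{\mathbb{R}_{\textup{alg}}}(S)) = J_{\mathbb{Q}}(Z_{\mathbb{R}}(S)) = J_{\mathbb{Q}}(V)$, and then $Z_{\mathbb{R}}(I) = V$ since $\langle S\rangle_\Q \subseteq I = J_{\mathbb{Q}}(V)$ gives $V \subseteq Z_{\mathbb{R}}(I) \subseteq Z_{\mathbb{R}}(S) = V$. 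Because $\mathbb{Q}[x_{1},\ldots ,x_{n}]$ is Noetherian and $I$ is radical, $I$ is the intersection $P_{1}\cap \cdots \cap P_{s}$ of its finitely many minimal primes, and these primes are pairwise incomparable.

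Next I would check that each $P_{i}$ is a \emph{real} prime ideal. Since $I=\realrad{\Q}{I}$ by construction, $I$ is real, and minimal primes over a real ideal are again real (see \cite{BocCosRoyRAG}); hence every $P_{i}$ is real. Put $V_{i} := Z_{\mathbb{R}}(P_{i})$. Applying the Real Nullstellensatz and the density theorem a second time, now to the set $P_{i}\subseteq\mathbb{Q}[x_{1},\ldots ,x_{n}]$, which generates the ideal $P_{i}$, yields $J_{\mathbb{Q}}(V_{i}) = \realrad{\Q}{P_{i}} = P_{i}$, the final equality because $P_{i}$ is real. This proves the first assertion of the proposition.

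It then remains to recognise $V_{1}\cup\cdots\cup V_{s}$ as the irreducible decomposition of $V$. Since the zero-set operator sends a finite intersection of ideals to the union of their zero sets, $V = Z_{\mathbb{R}}(I) = Z_{\mathbb{R}}(P_{1}\cap\cdots\cap P_{s}) = V_{1}\cup\cdots\cup V_{s}$. Each $V_{i}$ is irreducible because its rational vanishing ideal $J_{\mathbb{Q}}(V_{i})=P_{i}$ is a real prime, which is the content of \cite[Thm.~2.8.3~(ii)]{BocCosRoyRAG}. The union is irredundant: $V_{i}\subseteq V_{j}$ would force $P_{j}=J_{\mathbb{Q}}(V_{j})\subseteq J_{\mathbb{Q}}(V_{i})=P_{i}$, contradicting incomparability of the minimal primes when $i\neq j$. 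By the uniqueness of the irreducible decomposition stated above, $\{V_{1},\ldots ,V_{s}\}$ is exactly the set of irreducible components of $V$.

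The step I expect to be the main obstacle is the transfer between the algebra over $\mathbb{Q}$ and the geometry over $\mathbb{R}$: first, that minimal primes of a real ideal stay real---this is what makes the Nullstellensatz available again, for each $P_{i}$---and second, that a real variety whose rational vanishing ideal is a real prime is genuinely irreducible. The latter is where the cited theorem carries the load; a self-contained argument through $J(A\cup B)=J(A)\cap J(B)$ is delicate, because a real subvariety of $V_{i}$ need not be cut out by rational polynomials and hence need not be recovered from its $\mathbb{Q}$-vanishing ideal. The remaining pieces---the formal behaviour of $Z$ and $J$ and the Noetherian decomposition into minimal primes---are routine.
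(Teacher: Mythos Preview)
The paper does not supply a proof of this proposition at all: it is stated with a ``cf.'' citation to \cite[Thm.~2.8.3~(ii)]{BocCosRoyRAG} and left without argument. So there is nothing in the paper to compare your proof against line by line; you have written out what the paper merely gestures at.

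Your argument is the standard one and is essentially correct. The chain $I=J_{\mathbb{Q}}(V)$, the primary decomposition $I=P_1\cap\cdots\cap P_s$, the fact that minimal primes over a real ideal are real, and the recovery $J_{\mathbb{Q}}(V_i)=P_i$ are all sound. You are also right to flag the one genuine subtlety: irreducibility of $V_i$ as a \emph{real} variety is governed by $J_{\mathbb{R}}(V_i)$ being prime, not $J_{\mathbb{Q}}(V_i)$, and a prime in $\mathbb{Q}[x_1,\ldots,x_n]$ need not extend to a prime in $\mathbb{R}[x_1,\ldots,x_n]$ (think of $\langle x^2-2\rangle$). So strictly speaking the $V_i$ you produce are the $\mathbb{Q}$-irreducible components of $V$, and any one of them could in principle split further over $\mathbb{R}$. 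The paper's statement is a bit loose on this point; given how the proposition is used in Section~\ref{sec:algebraic-computations}---computing minimal primes over $\mathbb{Q}$ via Gr\"obner bases---the $\mathbb{Q}$-decomposition is in fact what is wanted, and your proof delivers exactly that. Your closing paragraph already identifies this issue accurately, so there is no gap on your side; if anything, you have been more careful than the paper.
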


\section{Learning the Underlying Variety}
\label{sec:learning-variety}
\noindent
In this section, we consider the following learning problem:
\begin{itemize}
\item[(LP)] 
  \textit{Given a dataset $\Omega=(a_{1},\ldots ,a_{m})$
    of points in $[0,1]^{n}$ sampled from a variety $V \subset \mathbb{R}^n$, find that variety.}
\end{itemize}
\noindent
We interpret this as the optimization problem:
\begin{itemize}
\item[(OP)]
  \textit{For a fixed class of varieties $\mathcal{V}$ and an objective function
  $A:\mathcal{V}\times([0,1]^{n})^{m}\to\mathbb{R}^{+}$, given a dataset $\Omega=(a_{1},\ldots ,a_{m})$ of points in $[0,1]^{n}$ find $\argmax\limits_{V\in \mathcal{V}} A(V,\Omega)$.}
\end{itemize}  
  
One way to find a suitable objective function $A$ is by following the Bayesian machine learning paradigm \cite{barber}. Instead of working with varieties
directly, we use the observation from Proposition \ref{prop:real-variety-zero-set-single-polynomial} that every real variety $V$ can be expressed as $Z(f)$ for a single polynomial $f\in\mathbb{R}[x_{1},...,x_{n}]$.
Therefore, one can single out a class of polynomials $\Theta\subset\mathbb{R}[x_{1},\ldots ,x_{n}]$ such that each variety $V\in\mathcal{V}$ is defined by
some polynomial $f\in \Theta$. Given a posterior probability distribution $p(f|\Omega)$ over $\Theta$,
we can define an objective function \[ A(V,\Omega):=\max\limits_{f\in\Theta,Z(f)=V}p(f|\Omega) \ . \]
The value  $A(V,\Omega)$ 
is to be interpreted as being proportional to the probability that $V$ is the variety from which $\Omega$ was sampled. We need to take a maximum in this definition of $A$ since different polynomials in $\Theta$ may define the same variety $V$. 
With this choice of $A$, the learning problem $\argmax\limits_{V\in\mathcal{V}}A(V,\Omega)$ reduces to the 
Maximum A Posteriori (MAP) problem  \[ \argmax\limits_{f\in\Theta}p(f|\Omega) \ .\]

Assume that we are given  a likelihood distribution $p(x|f)$ over $[0,1]^{n}$ and a prior distribution $p(f)$ over $\Theta$ so that $\frac{1}{\kappa} p(x|f)p(f)$ for a normalization constant $\frac{1}{\kappa}$. If furthermore the samples $\Omega=(a_{1},\ldots ,a_{m})$ are independent and
identically distributed (IID) \cite{barber}, then the MAP problem is explicitly given by
\[ \argmax\limits_{f\in\Theta}\frac{1}{\kappa}\prod\limits_{i=1}^{m}p(a_{i}|f)p(f)^{m} \ . \]


The likelihood $p(x|f)$ should be roughly thought of as the probability of sampling $x\in[0,1]^{n}$ if the true underlying variety is $Z(f)$. To be robust towards noise and outliers, we allow points sampled from $Z(f)$ to be near $Z(f)$ even if they are not exactly on it. So instead of being supported on $Z(f)$, $p(x|f)$ should ideally depend on the distance of $x$ to $Z(f)$. 

\begin{remark}
In the remainder of this section we explore one way to 
obtain these probabilities. It should be emphasized that this is only one particular approach within the general theory discussed in this paper.
Further approaches and more detailed comparions will be followed up in future work.
\end{remark}

The most obvious notion of distance here is the \textit{geometric distance}
\[ d_\textup{G}(x,f)=\inf\limits_{y\in Z(f)} \|x-y\|_{2} \ .  \]
However, working with $d_\textup{G}$ can be intractable, especially for optimization purposes. 
For a discussion on the complexity of this problem see \cite{Gfvert2020}.

Instead, we use a relaxation called the \textit{algebraic distance} \cite{Gander1994} given by
\[ d_\textup{A}(x,f)=|f(x)| \ . \]
By  {\L}ojasiewicz's inequality \cite[Sec.~18, Thm.~2]{LojESA}
(see also \cite{Bierstone1988,ColdingMinicozzi}),
the algebraic distance is an upper bound on the geometric distance.
More precisely, {\L}ojasiewicz's inequality entails the following result.

\begin{proposition}
  Given a polynomial $f\in \R[x_1,\ldots,x_n]$ such that $Z(f)\cap [0,1]^{n}\neq 0$,
  there exist $a,C>0$ with $0 < a < \frac{1}{2}$ such that
  \[ d_\textup{G}(x,f)\leq C \, d_\textup{A}(x,f)^a \quad \text{for all  } x\in [0,1]^{n} \ . \]
\end{proposition}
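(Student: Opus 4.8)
The plan is to derive the stated bound from the classical Łojasiewicz inequality applied to a carefully chosen pair of functions on the compact box $[0,1]^n$. First I would dispose of the trivial case where $Z(f) = \emptyset$ (equivalently $f$ has no real zeros in $[0,1]^n$, or even in $\R^n$): in that case $d_G(x,f)$ is a continuous, strictly positive function on the compact set $[0,1]^n$, hence bounded above by some constant $M$, while $d_A(x,f) = |f(x)|$ is continuous and strictly positive there, hence bounded below by some $\varepsilon > 0$; then $d_G(x,f) \le M = (M/\varepsilon^a)\,\varepsilon^a \le (M/\varepsilon^a)\,d_A(x,f)^a$ for any choice of $a \in (0,\tfrac12)$, so the inequality holds with $C = M/\varepsilon^a$. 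So we may assume $Z(f) \cap [0,1]^n$ is a nonempty compact set (if $Z(f)$ meets $\R^n$ but not the box, a similar positivity argument works, possibly after noting $d_G$ is still bounded on the box).

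Next, for the main case I would set $\varphi(x) = d_G(x, Z(f))$, the distance from $x$ to the real zero set, and $\psi(x) = |f(x)|$, both continuous functions on a neighborhood of $[0,1]^n$, and observe that $\{\psi = 0\} = Z(f) \subseteq \{\varphi = 0\}$ on that neighborhood. The Łojasiewicz inequality in the form cited (\cite[Sec.~18, Thm.~2]{LojESA}) says that on a compact set, if $\psi$ is real analytic (a polynomial suffices) and $\varphi$ is continuous with $\{\varphi = 0\} \supseteq \{\psi = 0\}$, then there exist constants $C' > 0$ and $\alpha > 0$ such that $|\varphi(x)|^\alpha \le C' |\psi(x)|$ for all $x$ in that compact set — here applied with the roles arranged so that $d_G(x,f)^\alpha \le C' |f(x)|$ on $[0,1]^n$. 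Taking $a = 1/\alpha$ if $\alpha \ge 1$, or simply $a = \alpha$ after replacing $\alpha$ by $\min(\alpha, \text{anything} < \tfrac12)$ — more carefully: from $d_G(x,f)^\alpha \le C'|f(x)|$ we get $d_G(x,f) \le (C')^{1/\alpha} |f(x)|^{1/\alpha}$, and since $|f(x)|$ is bounded on the box, say $|f(x)| \le K$, for any exponent $a$ with $0 < a \le 1/\alpha$ we have $|f(x)|^{1/\alpha} = |f(x)|^{a} |f(x)|^{1/\alpha - a} \le K^{1/\alpha - a} |f(x)|^a$, so the inequality holds with $C = (C')^{1/\alpha} K^{1/\alpha - a}$ and any such $a$; in particular we may pick $a \in (0, \tfrac12)$.

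The one technical subtlety — the step I expect to require the most care — is that the exponent $\alpha$ coming out of Łojasiewicz need not itself be less than the reciprocal of $\tfrac12$, and one must also handle the degenerate cases cleanly: when $f \equiv 0$ the statement is vacuous (both sides are $0$), and when $Z(f)$ is empty the Łojasiewicz inequality does not directly apply and the positivity/compactness argument above must be invoked instead. Aside from that bookkeeping, the proof is essentially a direct citation: Łojasiewicz gives the inequality $d_G(x,f)^{\alpha} \le C' d_A(x,f)$ for some $\alpha > 0$, and then monotonicity of $t \mapsto t^a$ together with boundedness of $d_A$ on the compact box lets us lower the exponent to any value in $(0,\tfrac12)$ at the cost of enlarging the constant. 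I would also remark that the restriction $a < \tfrac12$ is not essential to the geometry — it is merely a convenient normalization that will be used later when this bound feeds into the likelihood model — and that replacing $[0,1]^n$ by any compact set would work verbatim.
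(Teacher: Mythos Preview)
Your proposal is correct and follows exactly the route the paper indicates: the paper gives no proof beyond the sentence ``\L ojasiewicz's inequality entails the following result,'' and you have supplied the details of that deduction (edge cases, the classical bound $d_G^{\alpha}\le C'\,|f|$, and the exponent reduction via boundedness of $|f|$ on the compact box). One minor remark: the general two-function form of \L ojasiewicz you invoke needs $\varphi$ to be subanalytic rather than merely continuous, but since the specific distance-to-zero-set form is exactly the classical statement cited, this is a cosmetic imprecision rather than a gap.
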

\begin{remark}\label{rem:lojasiewicz}
  \begin{enumerate}[(a)]
  \item  The compactness requirement in {\L}ojasiewicz's inequality is the main reason why we restrict our datasets to be contained 
    in the hypercube $[0,1]^{n}$. 
  \item
    By Bierstone-Milman's version of {\L}ojasiewicz's inequality \cite[Thm.~6.4]{Bierstone1988} and subanalyticity
    of the Euclidean distance to a subanalytic set \cite[page 6]{Bierstone1988} one concludes that the 
    algebraic and the geometric distances are actually equivalent. This means that in addition to the constants
    $a,C >0$ of the proposition there exist constants $c,B >0$ such that
    \[
     d_\textup{A}(x,f) \leq  B \, d_\textup{G}(x,f)^c \quad \text{for all  } x\in [0,1]^{n} \ . 
   \]
 \item
 As stated in the proposition, the constants $a,C$ depend on the polynomial $f$. If however we restrict to polynomials
 $f\in \mathbb{Z}[x_{1},...,x_{n}]$ with degree bounded by some natural $D$, then there exist global constants bounding
 the geometric distance \cite{Solern1991}. Therefore, optimizing the algebraic distance gives a reasonable proxy for
 minimizing the geometric distance.
  \end{enumerate}

\end{remark}

Before proceeding further, let us introduce some additional notation. We denote monomials using multi-indices:
\[
  x^\alpha := x_{1}^{\alpha^{1}} \cdot \ldots \cdot x_{n}^{\alpha^{n}} \quad \text{for  }
  \alpha = (\alpha^{1},\ldots,\alpha^{n}) \in \N^n \ . 
\]
The degree $D $ of the monomial $ x^\alpha$ then is $D = |\alpha| :=\alpha^{1}+\ldots +\alpha^{n}$.
Correspondingly, a polynomial $f$ of degree $\leq D$ can be written as
\[
  f = \sum_{\mathsf{k=1}}^{{N}} c_{{k}}x^{\alpha_{{k}}}
    = c_{{1}}x^{\alpha_{{1}}}+\ldots +c_{{N-1}}x^{\alpha_{{N-1}}}+c_{{N}} \ ,
\]
where $N = \binom{n+D}{D}$, $c_{1},\ldots ,c_{N}\in\mathbb{R}$,
and where the multi-indices
\[
  \alpha_{1} = (\alpha_{1}^{1},\ldots, \alpha_{1}^{n}) , \ldots,
  \alpha_{N-1}  = (\alpha_{N-1}^{1},\ldots, \alpha_{N-1}^{n})
\]
are elements of $\N^n$.
For a fixed monomial ordering, $f$ can be uniquely represented as a vector in terms of its coefficients
$(c_{1},\ldots ,c_{N})$. Thus we have an isomorphism
$F:(c_{1},\ldots ,c_{N})\mapsto c_{1}x^{\alpha_{1}}+\ldots +c_{N-1}x^{\alpha_{N-1}}+c_{N}$ from the space of coefficients
$\mathbb{R}^{N}$ to
  the space $ \{f\in\mathbb{R}[x_{1},\ldots ,x_{n}]|\operatorname{Deg}(f) \leq D\}$ of polynomials of degree $\leq D$.
  Evaluating $f$ at each point of the data set  $\Omega$ results in the vector $(f(a_{1}),\ldots ,f(a_{m}))$.
  This operation can be expressed in matrix notation as
  \[ 
    \begin{pmatrix} f(a_{1}) \\ \vdots \\ f(a_{m}) \end{pmatrix}
    = U \begin{pmatrix} c_{1} \\ \vdots \\ c_{N} \end{pmatrix}  \ ,
  \]
  where $U$ is the \textit{multivariate Vandermonde matrix} \cite{Breiding2018} defined by
  $U_{ij}=x^{\alpha_{j}}(a_{i})$.

  One way to capture the inverse relationship between probability and distance is by taking the likelihood to be $p(x|f)=\frac{1}{r(f)}e^{-{d}_\textup{A}(x,f)^{2}}=\frac{1}{r(f)}e^{-f(x)^{2}}$ where $r(f)$ is the normalization factor $\int\limits_{[0,1]^{n}}e^{-f(x)^{2}} dx$. Since the algebraic distance depends on the magnitude of the coefficients, we restrict to the class	
  \[ \Theta_{D}:=\{f\in\mathbb{R}[x_{1},\ldots ,x_{n}]|\operatorname{Deg}(f)\leq D, \: \|F^{-1}(f)\|=1\} \ , \]
  where $\|F^{-1}(f)\|$ coincides with the norm on the coefficients
  $\sqrt{(c_{1},\ldots ,c_{N})(c_{1},\ldots ,c_{N})^{\trans}}$. The space $\Theta_{D}$ has an obvious
  parametrization $F|_{\sphere^{N-1}}:\sphere^{N-1}\to \Theta_{D}$ which takes a point $(c_{1},\ldots ,c_{N})$ in the sphere
  $\sphere^{N-1}$ to the polynomial $c_{1}x^{\alpha_{1}}+\ldots +c_{N-1}x^{\alpha_{N-1}}+c_{N}\in\Theta_{D}$.

The degree bound $D$ should be treated as a hyperparameter.
Choosing the hypothesis class $\Theta_{D}$ restricts us to the class $\mathcal{V}_{D}$ of varieties defined by a single polynomial of degree $\leq D$ whose coefficients have norm $1$. As with the standard machine learning set-up, there is a trade-off where setting $D$ higher leads to better approximation but higher risk of over-fitting. This issue is explored in Section \ref{sec:results}.

We define a prior distribution on $\Theta_{D}$ by $p(f)= \frac{1}{\beta}r(f)$ where $\beta$ is the normalization
constant
\[ \beta = \int_{\sphere^{N-1}} r(F(y))dy=\int_{\sphere^{N-1}}\int_{[0,1]^{n}} e^{{-(F(y)(x))}^{2}}dxdy \ . \]
This prior disfavors polynomials with low values of $r(f)$. Intuitively, such polynomials have high values of $\int_{[0,1]^{n}}|f(x)|dx$, which means that, with respect to the algebraic distance, these polynomials have a large total distance from the domain $[0,1]^{n}$. With this prior, the joint distribution simplifies to $p(x|f)p(f)=\frac{1}{\beta}e^{-f(x)^{2}}$.

With this choice of likelihood and prior distributions, the MAP problem is given by:

\noindent
\begin{align*}
\argmax\limits_{f\in\Theta_{D}}\frac{1}{\kappa}\prod\limits_{i=1}^{m}\frac{1}{\beta}e^{-f(a_{i})^{2}}
=&\argmax\limits_{f\in\Theta_{D}}\frac{1}{\kappa}(\frac{1}{\beta^{m}}e^{-\sum_{i=1}^{m}f(a_{i})^{2}}) \\
=&\argmax\limits_{f\in\Theta_{D}}\ \log( \frac{1}{\kappa}(\frac{1}{\beta^{m}}e^{-\sum_{i=1}^{m}f(a_{i})^{2}}))\\
=&\argmax\limits_{f\in\Theta_{D}}\ -\log(\kappa)-\log(\beta^{m})-\sum_{i=1}^{m}f(a_{i})^{2} \\
=& \argmin\limits_{f\in\Theta_{D}} \ \sum_{i=1}^{m}f(a_{i})^{2}\ .
\end{align*}
\noindent
This is equivalent to the problem
\[ \argmin\limits_{c \in \sphere^{N-1}} \ (c_{1},\ldots ,c_{N})\ U^{\trans}U\ (c_{1},\ldots ,c_{N})^{\trans}  \]
\noindent
which is a convex Quadratically Constrained Quadratic Program (QCQP) \cite{Convex}. The solutions are the normalized elements of the eigenspace $E_{\lambda}$ where $\lambda$ is the smallest eigenvalue of $U^{\trans}U$.

\begin{remark}
More generally, we can think of likelihood distributions of the form $\frac{e^{-\lambda f(x)^{2}}}{r(g,\lambda)}$ where $\lambda$ accounts for a variance-like term 
with $\lambda \in (0,\infty)$. In this case, by taking derivatives with respect to $\lambda$, optimization with respect to $\lambda$ is obtained by solving the following equation for $\lambda$:
\[ 
  \frac{\int_{\sphere^{N-1}}\int_{[0,1]^{n}} (F(y)(x))^{2}e^{{-\lambda(F(y)(x))}^{2}}dxdy}{\int_{\sphere^{N-1}}\int_{[0,1]^{n}} e^{{-\lambda(F(y)(x))}^{2}}dxdy}=\frac{\sum_{i=1}^{m}f(a_{i})^{2}}{m} \ .
\]
Intuitively, this says that $\lambda$ should be chosen so that over all the distributions coming from $\Phi_{D}$, the average variance agrees with the empirical variance $\frac{\sum_{i=1}^{m}f(a_{i})^{2}}{m}$.

On the other hand, optimization with respect to $f$ is again given by 
\[ \argmin\limits_{f\in\Theta_{D}}\sum_{i=1}^{m}f(a_{i})^{2} \ . \]
We are currently not aware of a computationally efficient way to infer $\lambda $ from the data, so we leave a more detailed consideration
of this issue for later work.
\end{remark}


In the case where $\lambda>0$, the matrix $U^{\trans}U$ is positive definite, in which case the QCQP is strictly convex \cite{Convex},
so there is essentially one unique MAP solution $\hat{f}$. In the case where $\lambda=0$, the MAP solutions are the normalized elements of 
$E_{\lambda}= \ker (A^{\trans}A)$. Here, every MAP solution $\hat{f}$ vanishes exactly on $\Omega$ and $p(\hat{f}|\Omega)=\frac{1}{\kappa\beta^{m}}$. 

Therefore, under the assumptions that the hypothesis class is $\mathcal{V}_{D}$, the objective function is $A(V,\Omega)=\max\limits_{f\in\Theta,Z(f)=V}p(f|\Omega)$, and the posterior distribution is $p(f|\Omega)=\frac{1}{\kappa}(\frac{1}{\beta^{m}}e^{-\sum_{i=1}^{m}f(a_{i})^{2}})$ on $\Theta_{D}$, the solutions to the learning problem are the varieties $Z(\hat{f})$ for every normalized element $\hat{f}\in E_{\lambda}$. We call this the \emph{MAP model} and  summarize it in the algorithm below.
\vspace{2mm}

\begin{algorithm}[H]\label{MAP-Model}
\SetAlgoLined
 \textbf{Input:} a dataset $\Omega\in ([0,1]^{n})^{m}$ and a degree bound $D$. \\
 \textbf{Output:} a polynomial $\hat{f}$ in $\Theta_{D}$, such that $Z(\hat{f})$ solves the learning problem under the above assumptions.\\[2mm]
 Fix an ordering and list all homogeneous monomials $x^{\alpha_{1}}, \ldots ,x^{\alpha_{N}}$ of degree $\leq D$.\\
 Compute the multivariate Vandermonde matrix $U_{ij}=x^{\alpha_{j}}(a_{i})$.\\
 Find the smallest eigenvalue $\lambda$ of $U^{\trans}U$ and its corresponding eigenspace $E_{\lambda}$.\\
 \textbf{return:} any normalized element $\hat{f}$ of $E_{\lambda}$
 \caption{MAP Model}
\end{algorithm}

\subsection{Expanding on the MAP Model}
One way to refine the result for the case $\lambda = 0$ is to enlarge our hypothesis class. If $\lambda = 0$ and $f_{1},\ldots ,f_{k}$ is a normalized basis for $\ker (U^{\trans}U)$, then $\hat{f}:=f_{1}^{2}+\ldots +f_{k}^{2}$ is a degree $2D$ polynomial whose zero-set satisfies
\[ Z(\hat{f}) = Z(\{f_{1},\ldots ,f_{k}\})=\bigcap\limits_{f\in\ker(U^{\trans}U)} Z(f) \ . \]
 That is, $\hat{f}$ defines the smallest variety given by a set of polynomials of degree $\leq D$. This method of taking intersections changes the hypothesis class and may no longer yield an MAP solution over $\Theta_{2D}$. However, this method yields a less redundant variety than the MAP solutions over $\Theta_{D}$ without the need to preform any further optimization. We call this the \emph{intersected MAP model}. It is summarized in the algorithm below.
\vspace{2mm}

\begin{algorithm}[H] \label{Intersected-MAP-Model}
\SetAlgoLined
 \textbf{Input:} a dataset $\Omega\in ([0,1]^{n})^{m}$ and a degree bound $D$. \\
 \textbf{Output:} a polynomial $\hat{f}$ in $\Theta_{2D}$, such that $Z(\hat{f})$ is the intersection of all solutions in $\mathcal{V}_{D}$ to the learning problem under the previous assumptions.\\[2mm]
 Fix an ordering and list all homogeneous monomials $x^{\alpha_{1}},\ldots ,x^{\alpha_{N}}$ of degree $\leq D$. \\
 Compute the multivariate Vandermonde matrix $U_{ij}=x^{\alpha_{j}}(a_{i})$.\\
 Find the smallest eigenvalue $\lambda$ of $U^{\trans}U$ and its corresponding eigenspace $E_{\lambda}$.\\
  \eIf{$\lambda>0$}{
    \textbf{return:} the (essentially) unique normalized element $\hat{f}$ of $E_{\lambda}$
   }{
   Find a basis $f_{1},\ldots ,f_{k}$ for $\ker (U^{\trans}U)$.\\
   \textbf{return:} $\hat{f}:= f_{1}^{2}+\ldots +f_{k}^{2}$.
  }
 \caption{Intersected MAP Model}
\end{algorithm}

\section{Algebraic Computations}
\label{sec:algebraic-computations}
Assume that $Z(f)$ for $f$ a polynomial in $n$ real variables
is the true underlying variety for the data set $\Omega$. We can use tools from commutative algebra to reveal information
about the geometry of $Z(f) \subset \R^n$. This analysis can be automated with the use of Gr\"obner bases
which are particular and computationally powerful types of generating sets for ideals 
$I \subset  \K[x_1,\ldots , x_n]$. 
We do not state the precise definition of a Gr\"obner basis here, but refer the reader to the excellent exposition
in \cite[Sec.\ 1.6]{10.5555/1557288}. Moreover, \cite{10.5555/1557288} provides a general overview on
computational commutative algebra.
To use Gr\"obner basis methods in a computer algebra system like SINGULAR \cite{SINGULAR}, we have to change the base field from $\mathbb{R}$ to $\mathbb{Q}$. If $f$ was obtained through a numerical procedure such as the MAP learning model, then the floating point coefficients of $f$ can be interpreted as rational numbers.

With Gr\"obner basis methods one can compute a generating set $f_{1},\ldots ,f_{k}$ for the ideal
\[ \realrad{\Q}{\langle f\rangle}\subset \mathbb{Q}[x_{1},\ldots ,x_{n}] \ .\] 
Using this generating set we can construct the ring $\mathbb{Q}[x_{1},\ldots ,x_{n}]/\realrad{\Q}{\langle f\rangle}$ which is a subring of the coordinate ring $\mathbb{R}(Z_{\mathbb{R}}(f))$. 
By Proposition \ref{prop:dimension-geometric-algebraic}, 
\[ \operatorname{dim}(Z_{\mathbb{R}}(f))=\operatorname{dim}(\mathbb{Q}[x_{1},\ldots ,x_{n}]/\realrad{\Q}{\langle f\rangle})\]
which can be computed using Gr\"obner bases. Similarly, we can compute the minimal prime ideals over $\realrad{\Q}{\langle f\rangle})$. By Proposition \ref{prop:irreducible-decomposition}, these are the ideals $J_{\mathbb{Q}}(V_{1}),\ldots ,J_{\mathbb{Q}}(V_{s})$,
where $V_{1},\ldots ,V_{s}$ are the irreducible components of $V$.

It should be noted, however, that even over $\mathbb{Q}$, Gr\"obner basis computations are in general very costly and may only be feasible for small $n$ and $D$, hence the need for numerical computations. For more details on the complexity of finding Gr\"obner bases see \cite{HUYNH1986196}.

\begin{example}

We can apply these concepts to the variety $V$ from Example \ref{ex:union-sphere-plane} using the following SINGULAR code.

\begin{verbatim}
  // Define R = QQ[x,y,z] with lexicographic ordering.
  ring R = 0,(x,y,z),lp; 
  poly f = ((x-1/2)^2 + (y-1/2)^2 + (z-1/2)^2 - 1/4)*(x-y);
  ideal I = f;
  LIB "realrad.lib";
  ideal I2 = realrad(I);
  size(reduce(I,I2));
  //->0
  size(reduce(I2,I));
  //->0
  LIB "primdec.lib";
  minAssGTZ(I2);
  //->[1]:
  //-> _[1]=2x2-2x+2y2-2y+2z2-2z+1
  //->[2]:
  //-> _[1]=x-y
  dim(I2);
  //->2
\end{verbatim}

First we define the ideal $I=\langle((x-\frac{1}{2})^{2} + (y-\frac{1}{2})^{2} + (z-\frac{1}{2})^{2} - \frac{1}{4})\cdot(x-y)\rangle$ over $\mathbb{Q}$. We then compute the real radical using the library \texttt{realrad.lib} \cite{realrad} and
observe that in fact $I=\realrad{\Q}{I}$. As expected, we find that $\dim (\mathbb{Q}[x_{1},\ldots ,x_{n}]/\realrad{\Q}{I})=2$ which coincides with $\dim (V)$. Using the library \texttt{primdec.lib} we also determine the minimal primes above $\realrad{\Q}{I}$ to be the vanishing ideal of the sphere of radius $\frac{1}{2}$ centered around $(\frac{1}{2},\frac{1}{2},\frac{1}{2})$ and the
vanishing ideal of the plane $x-y$. This agrees with the decomposition of $V$.

\end{example}

\section{Numerical Computations}
\label{sec:singular-heuristics}
Again, assume that $Z(f)$ is the true underlying variety for $\Omega$. We can also use numerical methods to study the geometry of $Z(f)$ by sampling new points from $Z(f)$ and working directly with those samples. This approach has the advantage of being computationally more tracktable than the algebraic computations relying on Gr\"obner bases.

One reason to obtain a new sample set $\Omega'$ from $Z(f)\cap[0,1]^{n}$ is that if $Z(f)$ is only an approximate fit, then the sample $\Omega'$ will reflect the geometry of $Z(f)$ more closely than $\Omega$. Hence, if we are specifically studying the model $Z(f)$, generating a sample set $\Omega'$ can lead to more accurate results.

First notice that the likelihood distributions $p(x|f)$ on $[0,1]^{n}$ can be used to generate data. This can be achieved using \textit{rejection sampling} which is outlined in Algorithm \ref{alg:rejection-sampling} . This sampling process reveals the underlying assumptions that the model makes about how the original data set $\Omega$ was generated and how noise was introduced.

However, capturing the model's assumptions also captures the noisy process through which $\Omega$ was supposedly generated. If the likelihood distribution depends on the algebraic distance, such as the distribution used in Section \ref{sec:learning-variety}, then this noise can be avoided by fixing a small $\eta > 0$ and accepting a point $a\in[0,1]^{n}$ if and only if $d_{A}(a,Z(f))=|f(a)|<\eta$. We call this \textit{direct sampling} and summarize it in Algorithm \ref{alg:direct-sampling}.

Setting a smaller $\eta$ threshold reduces the sampling noise, however this comes at the cost of increasing the efficiency of the sampling. This still works reasonably well for low values of $n$, but it does not scale well to higher dimensions. For higher dimensions, a more effective sampling method is given in \cite{8999343}. Alternatively, the variety could be sampled using Homotopy Continuation \cite{10.1007/978-3-319-96418-8_54}, which is a numerical method for computing the zero-set of a system of polynomials. In Homotopy Continuation, one begins with a simple system of polynomials whose roots are known, and then defines a homotopy, i.e.\ a continuous deformation, from the simple system to the system of polynomials that one is trying to solve. This method relies on tracking the paths that the roots take as the homotopy is being applied.

\begin{algorithm}[h]
  
\SetAlgoLined
 \textbf{Input:} a polynomial $f$, a likelihood distribution $p(x|f)$, and a target number of samples $m$.\\
 \textbf{Output:} a set $\Omega'$ of $m$ points sampled from $Z(f)\cap[0,1]^{n}$ according to $p(x|f)$.\\
 Initialize $\Omega'$ to $\emptyset$.\\
 \While{$|\Omega'|<m$}{
  Draw a random point $a$ from the uniform distribution on $[0,1]^{n}$.\\
  Draw a random number $\alpha$ from $[0,1]$.\\
  \eIf{$\alpha < p(a|f))$}{
    \textbf{Accept:} $\Omega'\xleftarrow{}\Omega'\cup \{a\}$.
   }{
   \textbf{Reject}.
  }
 }
 \textbf{return:} $\Omega'$.
 
 \caption{Rejection Sampling}\label{alg:rejection-sampling}
 
\end{algorithm}

\begin{algorithm}[h]

\SetAlgoLined
 \textbf{Input:} a polynomial $f$ and a target number of samples $m$.\\
 \textbf{Output:} a set $\Omega'$ of $m$ points sampled from $Z(f)\cap[0,1]^{n}$.\\
 Initialize $\Omega'$ to $\emptyset$.\\
 \While{$|\Omega'|<m$}{
  Draw a random point $a$ from the uniform distribution on $[0,1]^{n}$.\\
  \eIf{$|f(a)|<\eta$}{
   \textbf{Accept:} $\Omega'\xleftarrow{}\Omega'\cup \{a\}$.
   }{
   \textbf{Reject}.
  }
 }
 \textbf{return:} $\Omega'$.
 \caption{Direct Sampling}\label{alg:direct-sampling} 
\end{algorithm}

Let $\Omega'$ be a set of samples from $Z(f)\cap [0,1]^{n}$ obtained using direct sampling with an accuracy threshold of $\eta$. Under the hypothesis that the distribution $p(x|f)$ depends on $d_{A}(x,f)$, we propose a method for finding the points in $\Omega'$ near $\operatorname{Sing}(Z(f))$. First, by Theorem \ref{thm:singular-locus-hypersurface}, if $V$ is $n-1$ dimensional and $f$ is irreducible, then every point $b\in [0,1]^{n}$ which lies in
the singular locus $\operatorname{Sing}(Z(f))$ satisfies
\[  f(b)=\partial_{x_{1}}f(b)=\ldots =\partial_{x_{n}}f(b)=0 \ .\]
By continuity of the map $\| \nabla f(x)\|_{2}=\sqrt{(\partial_{x_{1}}f(x))^{2}+\ldots +(\partial_{x_{n}}f(x))^{2}}$
there exists for every $\varepsilon >0$ 
an open neighborhood $U$ of $b$ such that all points
$a\in U\cap \Omega'$ satisfy $\| \nabla f(a)\|_{2}<\varepsilon$.

Note, however, that the converse of this need not be true meaning that a point $a\in\Omega'$ satisfying $\| \nabla f(a)\|_{2}<\varepsilon$ is
not necessarily near a point $b\in\operatorname{Sing}(Z(f))$. For example, consider the polynomial $f \in \R[x,y]$ given by 
$f(x,y)=x^2$.  The gradient $\nabla f $ vanishes exactly on the line $Z(x)=\{ (x,y)\in \R^2\mid x =0\}$
which is smooth and coincides with the variety $Z(f)$. Hence the zero locus of the gradient
is nowhere close to the singular locus of the variety $Z(f)$ which is empty by smoothness of $Z(f) =Z(x)$.

Nevertheless, assuming the converse appears to be justified for computational purposes and it provides a powerful heuristic method for detecting singularities. More specifically, we can heuristically assume that regardless of the dimension of $V$ and the reducibility of $f$, if $\varepsilon>0$ is small enough, then $\{a\in \Omega'|\|\nabla f(a)\|_{2}<\varepsilon\}$ is a set of points at or near $\operatorname{Sing}(Z(f))\cap [0,1]^{n}$. We will denote this set by
$\operatorname{Sing} (\Omega')$ and call the described method the \textit{singularity heuristic}. Clearly, the
accuracy of this method depends on the magnitudes of $\eta$ and $\varepsilon$ and the density of the sample
set $\Omega'$. We explore the efficacy of the singularity heuristic in Section \ref{sec:results}.

\begin{algorithm}[h]
\SetAlgoLined
 \textbf{Input:} a polynomial $f$, a set of samples $\Omega'$ from $Z(f)\cap[0,1]^{n}$, and a singularity threshold  $\varepsilon$. \\
 \textbf{Output:} $\operatorname{Sing}(\Omega')$, a subset of $\Omega'$, heuristically assumed to be near $\operatorname{Sing}(Z(f))$ \\
 Initialize $\operatorname{Sing}(\Omega')$ to $\emptyset$.\\
 Find the partial derivatives $\partial_{x_{1}}f(x),\ldots ,\partial_{x_{n}}f(x)$,\\
 \For{$a\in \Omega'$}{

  \eIf{$\|\nabla f(a)\|_{2}<\varepsilon$}{
   \textbf{Accept} $a$  $\operatorname{Sing}(\Omega')\xleftarrow{}\operatorname{Sing}(\Omega')\cup \{a\}$.
   }{
   \textbf{Reject}.
  }
 }
 \textbf{Return:} $\operatorname{Sing}(\Omega')$.
 \caption{Singularity Heuristic}
\end{algorithm}

Another crucial numerical task is to test if two varieties $Z(f)$ and $Z(g)$ are equal. 
Note that the equality
\begin{equation}
  \label{eq:equalityzerosets}
  Z(f)=Z(g)
\end{equation}
does in general not entail the  polynomials $f$ and $g$ to be equal as for example the choice $f = x-y$ and $g = (x^2 + y^2) \cdot (x-y)$ shows.
Furthermore, testing the equality \eqref{eq:equalityzerosets} is obviously not possible if one of the polynomials, say $g$, is unknown and we only have access
to samples $\Lambda$ from $Z(g)\cap [0,1]^{n}$. One solution is to work entirely in terms of samples. Namely, 
take a set of samples $\Omega'$ from $Z(f)\cap [0,1]^{n}$ and directly compare $\Lambda$ 
with $\Omega'$. To compare $\Lambda$ with $\Omega'$, we use the Wasserstein distance \cite{ruschendorf1985wasserstein}, which measures the cost of the 
optimal transport taking the point cloud $\Lambda$ to the point cloud $\Omega'$. If we have $N$ samples from $\mathbb{R}^n$ this is simply
\[
  W (\Omega', \Lambda) = \min\limits_{\pi \in \operatorname{S}_N} \left\{\sum_{i=1}^N\big\|\Omega'_i - \Lambda_{\pi(i)}\big\|_2 \right\} \ ,
\]
where $\operatorname{S}_N$ denotes the group of permutations of the integers $\{1,2, \ldots , N\}$.  We can similarly apply 
the Wasserstein distance in order to compare a set of samples from $\operatorname{Sing}(Z(g))\cap [0,1]^{n}$ with the set
$\operatorname{Sing}(\Omega')$.

\section{Results}
\label{sec:results}
In this section, we test the MAP model from Section \ref{sec:learning-variety} and the singularity heuristic from Section \ref{sec:singular-heuristics}.

Starting with the variety $V$ from Example \ref{ex:union-sphere-plane}, we used the parametric form of $V$ to produce a sample set $\Omega$ consisting of $1600$ points sampled from $V\cap [0,1]^{3}$ (Figure \ref{fig:7.1a}). We also used the parametric form of $\operatorname{Sing}(V)$ to
produce a sample set, call it $\operatorname{Sing}(\Omega)$, of $400$ points sampled from $\operatorname{Sing} (V)$ (Figure \ref{fig:7.1d}). Plotted in  Figure \ref{fig:7.1c} is the learned variety.

\begin{figure}
\centering
\begin{subfigure}{.3\textwidth}
\centering
\includegraphics[scale=.25]{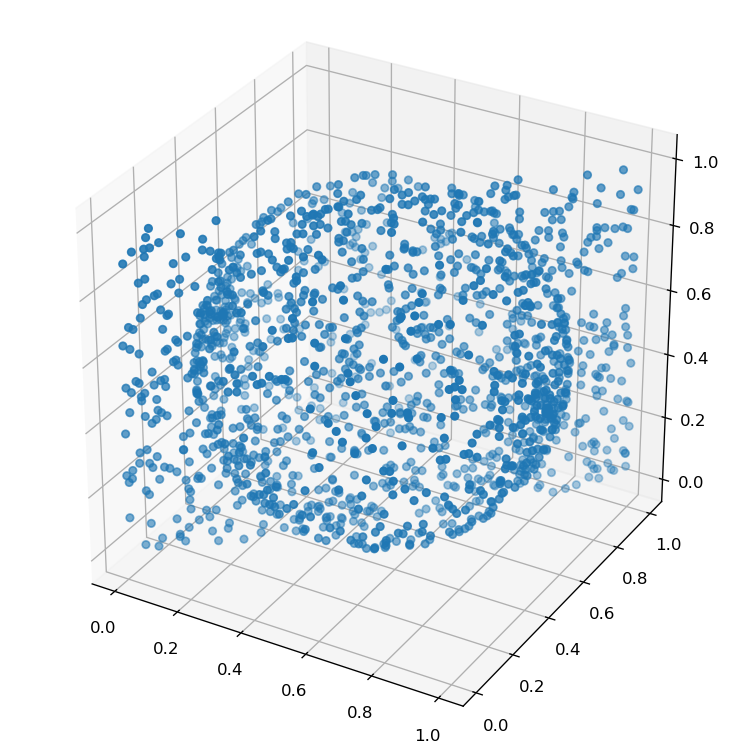}
\caption{}\label{fig:7.1a}
\end{subfigure}
\begin{subfigure}{.3\textwidth}
\centering
\includegraphics[scale=.25]{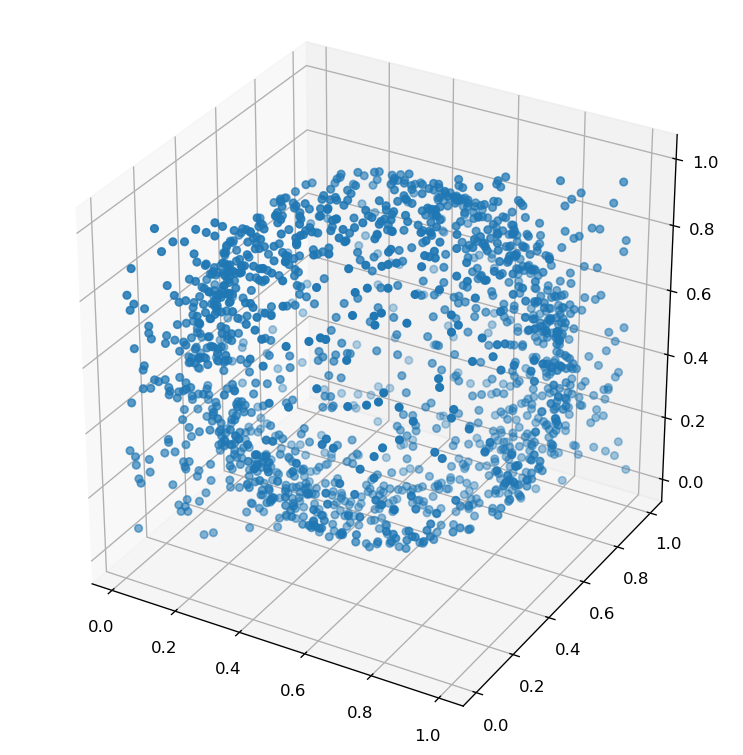}
\caption{}\label{fig:7.1b}
\end{subfigure}
\begin{subfigure}{.3\textwidth}
\centering
\includegraphics[scale=0.25]{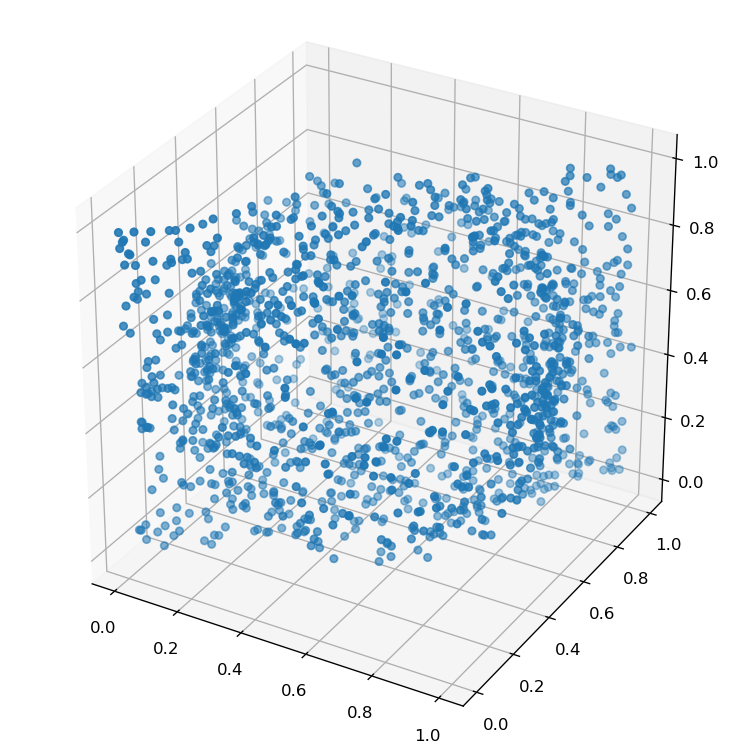}
\caption{}\label{fig:7.1c}
\end{subfigure}
\caption{Samples from $V$ and from the learned variety}\label{fig:7sampling-variety}
\end{figure}
\begin{figure}
\centering
\begin{subfigure}{.3\textwidth}\centering
\includegraphics[scale=.25]{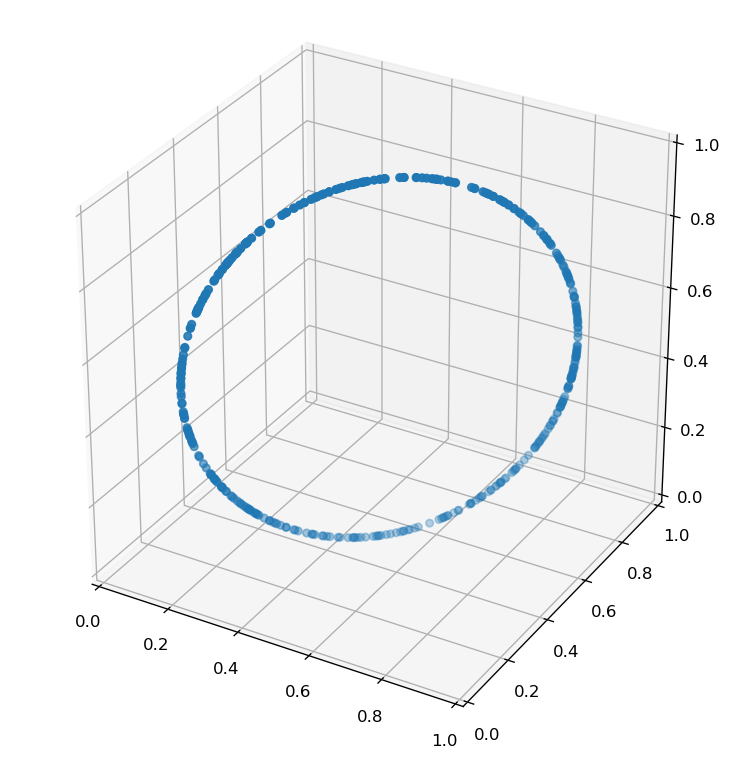}
\caption{}\label{fig:7.1d}
\end{subfigure}
\begin{subfigure}{.3\textwidth}\centering
\includegraphics[scale=.25]{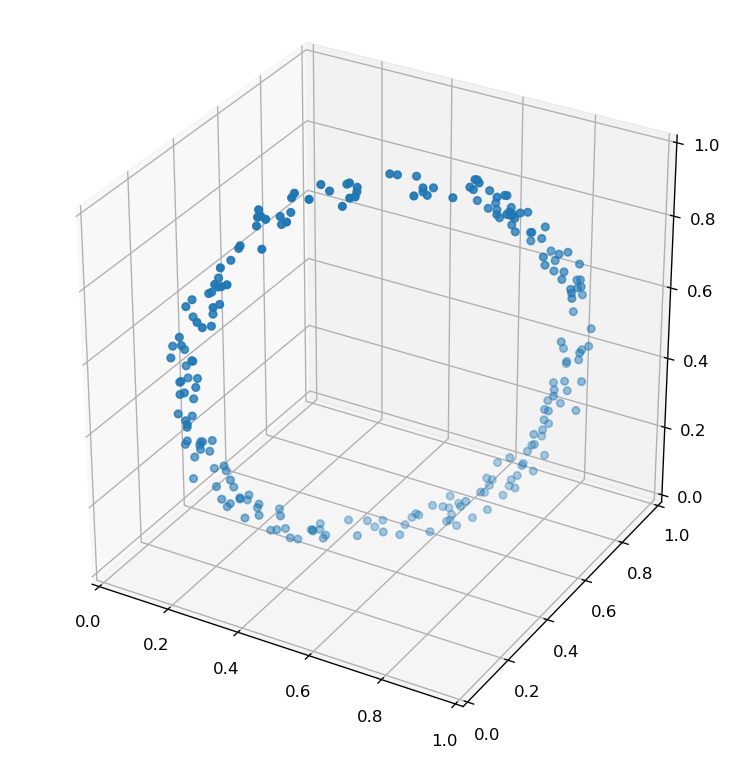}
\caption{}\label{fig:7.1e}
\end{subfigure}
\begin{subfigure}{.3\textwidth}
\end{subfigure}\caption{Samples from the singular locus of $V$ and results from the singularity heuristic}\label{fig:7singular-locus}
\end{figure}
We applied the MAP model with $D=3$. After rounding and scaling, we obtained exactly the target polynomial from Example \ref{ex:union-sphere-plane} 
\begin{eqnarray*}
  x^{3}-x^{2}y-x^{2}+xy^{2}+xz^{2}-xz+\frac{1}{2}x-y^3+y^2-yz^{2}+yz-\frac{1}{2}y \!\!&=& \\
  = \left((x-\frac{1}{2})^{2}+(y-\frac{1}{2})^{2}+(z-\frac{1}{2})^{2}-\frac{1}{4}\right)
  \cdot(x-y) \!\!&=&\!\! f \ .
\end{eqnarray*}


By direct sampling with $\eta=0.001$ we produced a data set $\Omega'$ consisting of $1600$ points sampled from $Z(f)\cap[0,1]^{3}$
(Figure \ref{fig:7.1b}).
Using these samples, we applied the singularity heuristic with value $\varepsilon = 0.02$ and obtained a set $\operatorname{Sing} (\Omega')$
consisting of $232$ points (Figure \ref{fig:7.1e}).

To test the MAP model under more general conditions, we used the dataset $\Omega$ as above and tested the MAP model for different values of $D$. For a more quantitative picture, we computed the Wasserstein distance between $\Omega$ and $1600$ points obtained through direct sampling from $Z(f)\cap[0,1]^{3}$ for each learned polynomial $f$ and for different values of $\eta$. We repeated each test $3$ times. The average values are given in Figure \ref{fig:7.2a}, where $\log_{10} (\eta)$ is the variable along the horizontal axis and the Wasserstein distance
$W(\Omega,\Omega')$ is the variable along the vertical axis.  

\begin{figure}
\centering
\begin{subfigure}{0.32\textwidth}\centering
\includegraphics[scale=0.4]{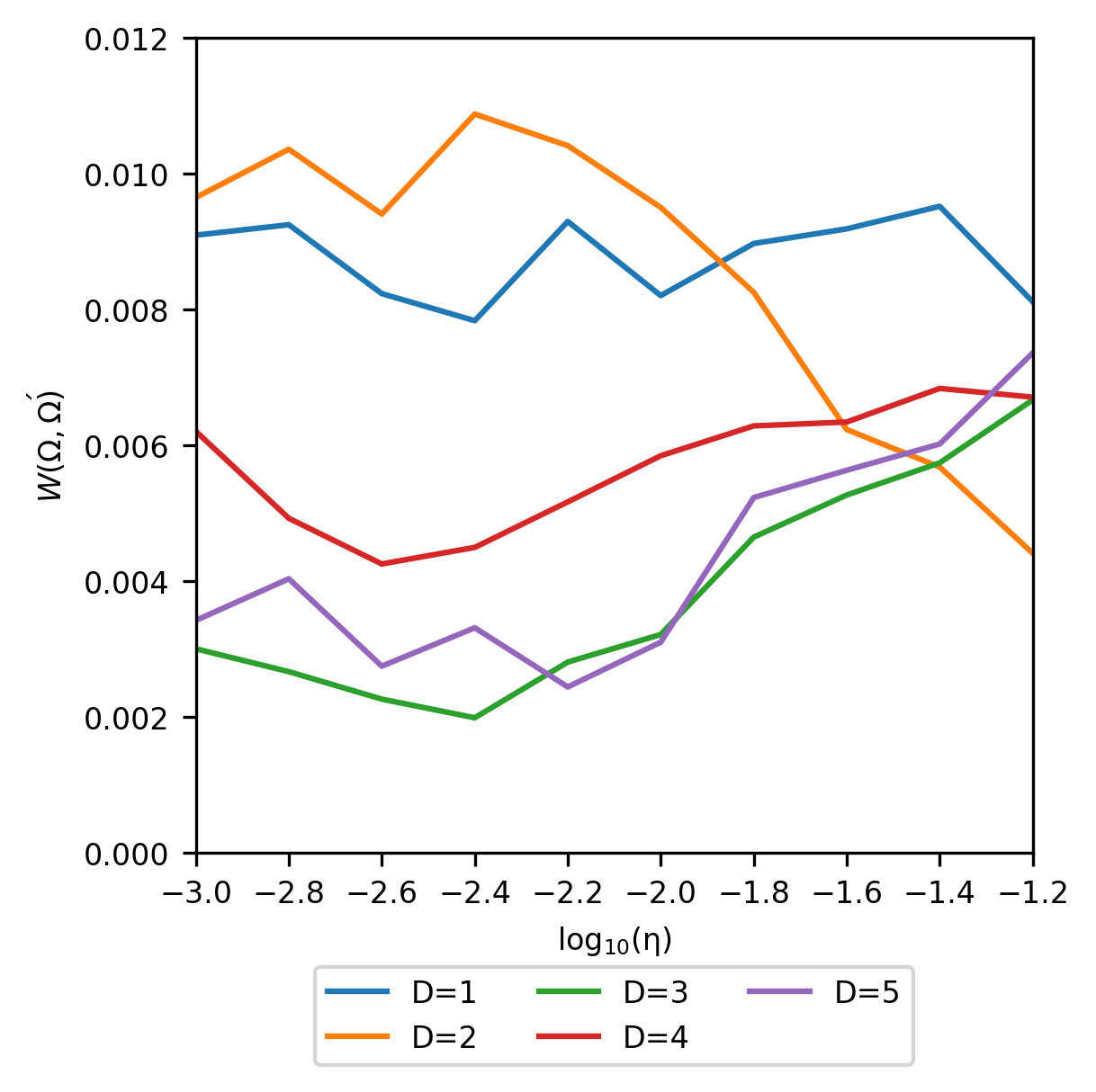}
\caption{}\label{fig:7.2a}
\end{subfigure}
\begin{subfigure}{.32\textwidth}\centering
\includegraphics[scale=0.4]{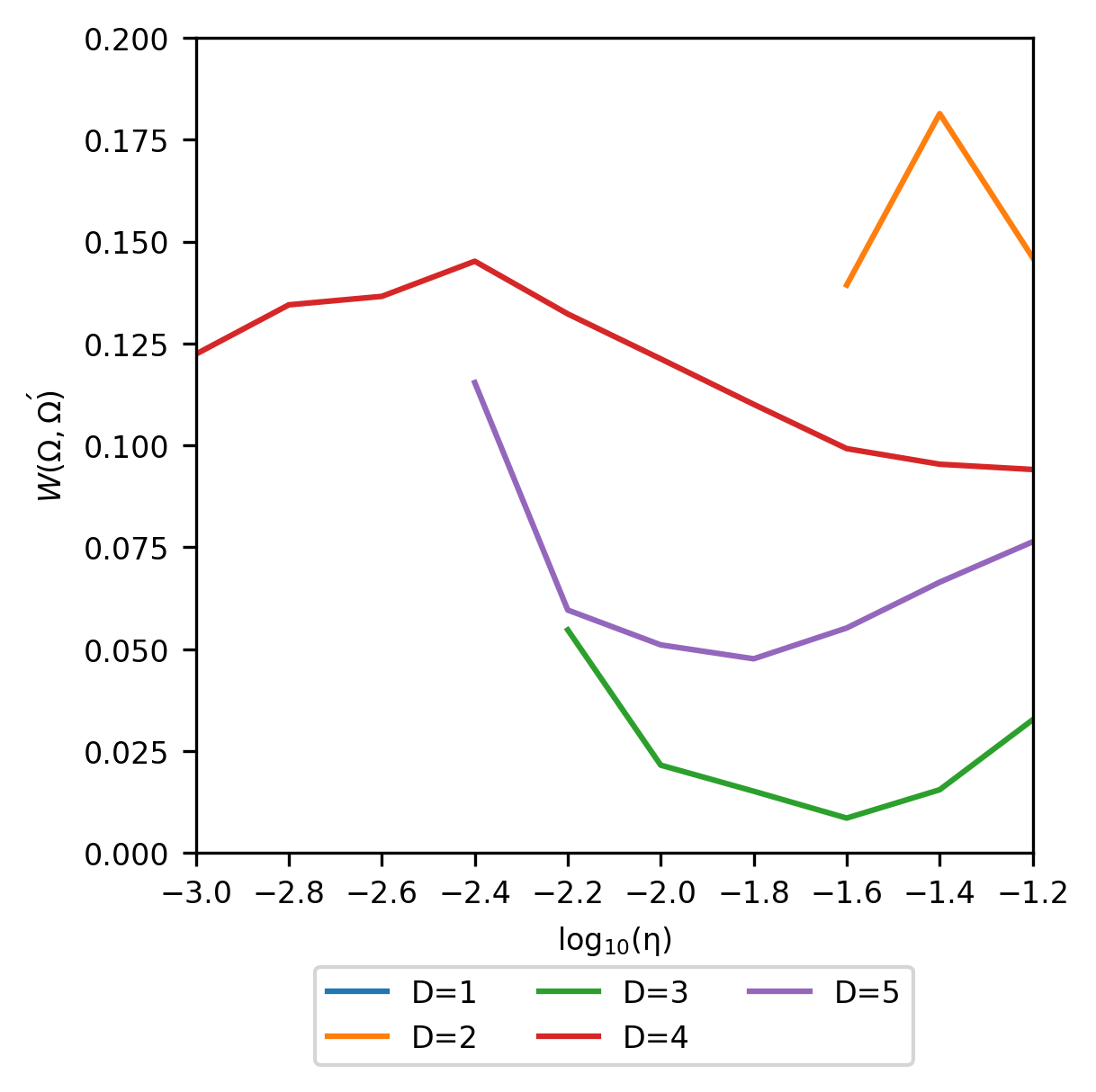}
\caption{}\label{fig:7.2b}
\end{subfigure}
\begin{subfigure}{.32\textwidth}\centering
\includegraphics[scale=0.35]{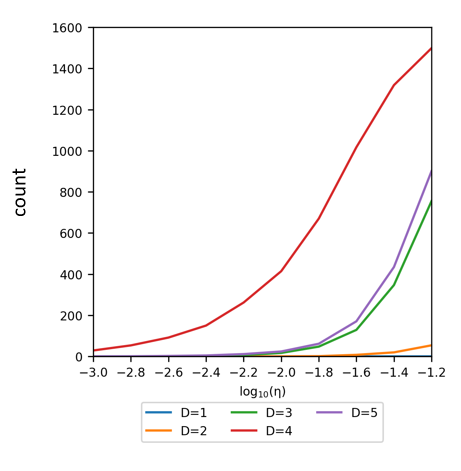}
\caption{}\label{fig:7.2c}
\end{subfigure}\caption{MAP model and singularity heuristic performance (noise-free)}
\end{figure}

\begin{figure}
\centering
\begin{subfigure}{0.32\textwidth}\centering
\includegraphics[scale=0.4]{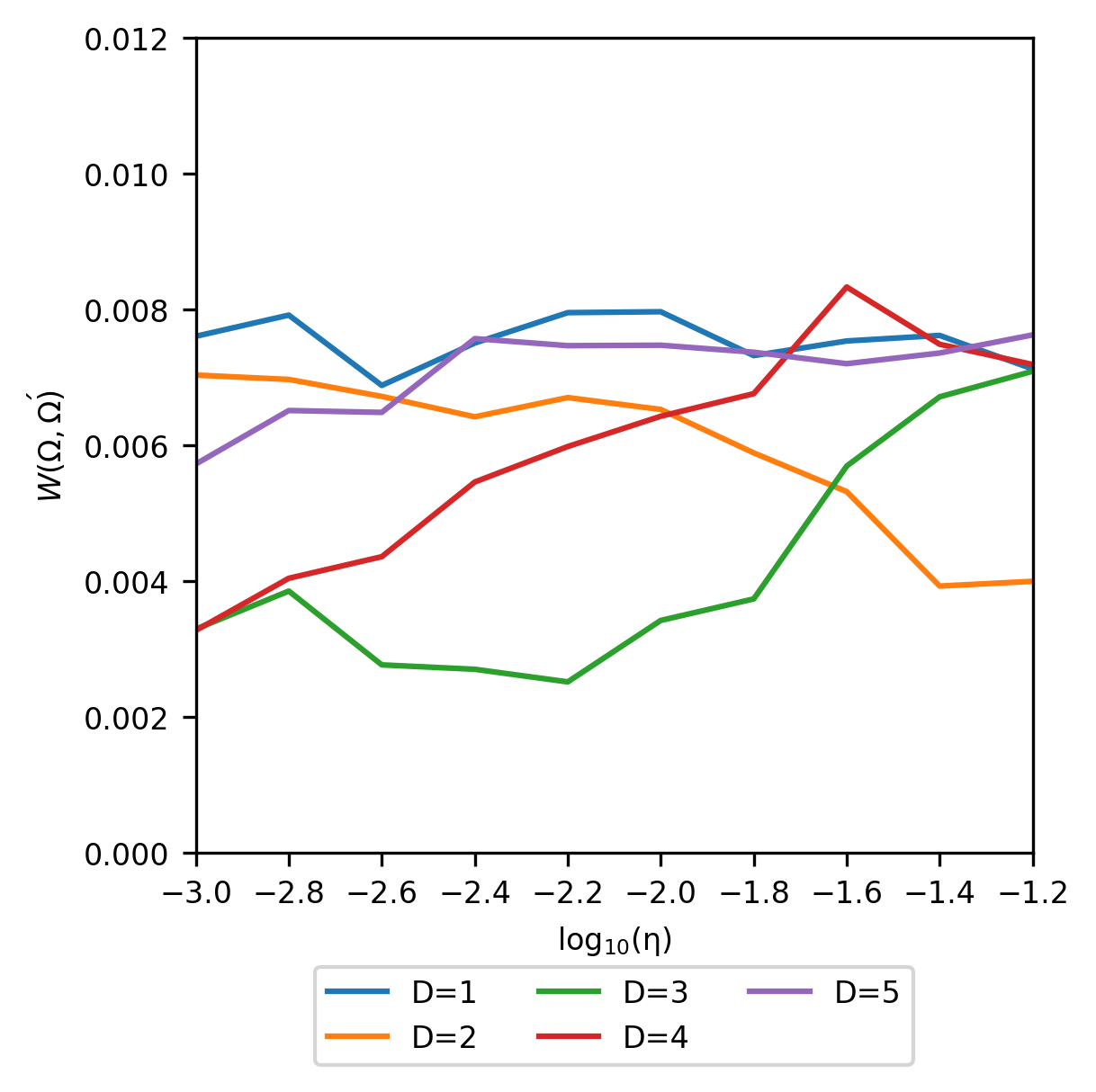}
\caption{}\label{fig:7.2d}
\end{subfigure}
\begin{subfigure}{.32\textwidth}\centering
\includegraphics[scale=0.4]{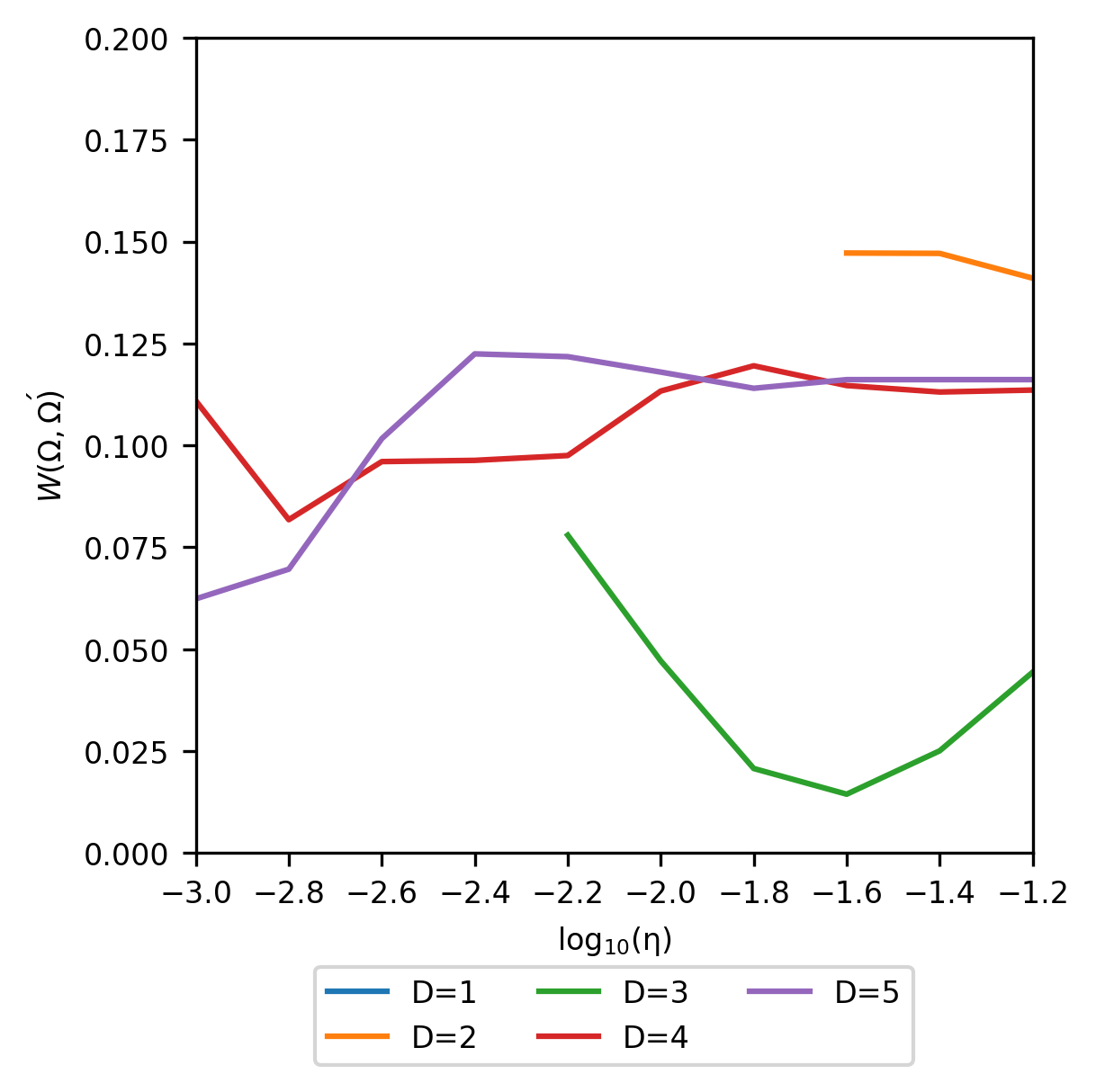}
\caption{}\label{fig:7.2e}
\end{subfigure}
\begin{subfigure}{.32\textwidth}\centering
\includegraphics[scale=0.35]{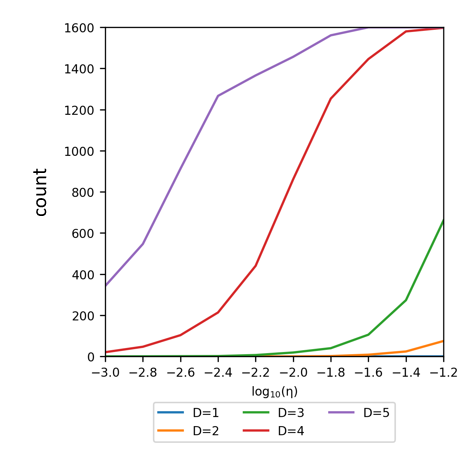}
\caption{}\label{fig:7.2f}
\end{subfigure}\caption{MAP model and singularity heuristic performance (noise added)}
\end{figure}

As one can see, increasing $\eta$ largely results in an increase in the Wasserstein distance. However, very low values of $\eta$ seem to perform worse than higher values. One explanation for this is that direct sampling does not produce a uniform set of samples over $Z(f)\cap[0,1]^{3}$. This can be seen for example in Figure \ref{fig:7.1b}. Nonetheless, we see that the overall minimal distance is attained with $D=3$ which is consistent with the fact that $3$ is the lowest degree of any single polynomial defining $V$.

To test the singularity heuristic, we used the above samples and learned polynomials. For each value of $D$, we selected the best
performing value of $\eta$ and applied the singularity heuristic to the $3$ sample sets corresponding to that value of $D$ and $\eta$.
Then we measured the Wasserstein distance between $\operatorname{Sing} (\Omega)$ and the set of points $\operatorname{Sing}(\Omega')$ which passed the singularity heuristic. The average
values are given in Figure \ref{fig:7.2b}. The missing results are the cases where $\operatorname{Sing}(\Omega')$ is empty. In Figure \ref{fig:7.2c} we give the numbers of points that do pass the singularity heuristic. Once again, we see that the overall minimal distance is attained with $D=3$. 

To examine the effect of noise on the MAP model, we repeated the same process but added Gaussian noise with a standard deviation of $0.025$ to the original data $\Omega$. The result of this is illustrated in Figure \ref{fig:7.1c}. Samples from the learned polynomials were compared to the original noise-free data in order to test the robustness towards noise. The results are given in Figures \ref{fig:7.2d}, \ref{fig:7.2e}, and \ref{fig:7.2f}.

The values $D=1,2,3$ show similar trends with the overall minimal distances still attained at $D=3$. However the values $D=4,5$ show significantly worse performance which is due to over-fitting to noise. This is to be expected since the models for $D=4,5$ contain many more additional parameters which lead to higher model complexity.

\begin{figure}
\centering
\includegraphics[scale=.25]{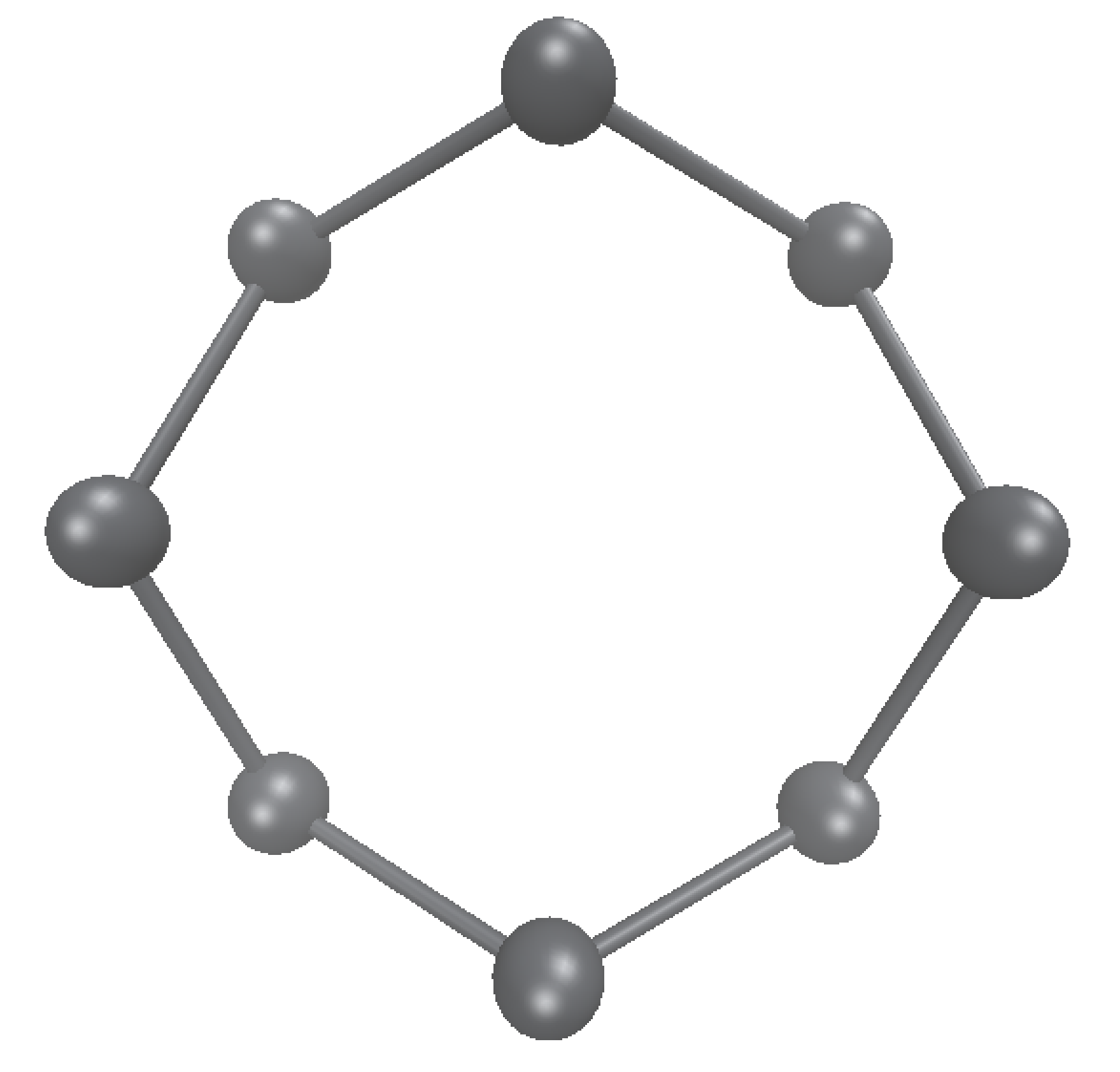}
\caption{Cyclooctane}\label{fig:7.3}
\end{figure}

\begin{example}
  Cyclooctane is a cyclic molecule with  chemical formula $(\text{CH}_{2})_{8}$ (Figure \ref{fig:7.3}).
  The full energy landscape of cyclooctane in the sense of Born-Oppenheimer contains the positions of the carbon and hydrogen atoms and
  therefore has $72$ dimensions which is too high dimensional to be studied effectively by computational or numerical means. 
  The
  \textit{conformation space of cyclooctane} is a reduced energy landscape which consists of the chemically allowable
  positions for the eight carbon atoms differing from the lowest energy state 
  only by rotation around single bonds. By geometric reasons, see e.g.\ \cite{Breiding2018}, 
  the conformation space therefore is the variety cut out by a set of $16$ equations in 
  $\mathbb{R}[x_{1},y_{1},z_{1},\ldots ,x_{8},y_{8},z_{8}]$:
\begin{align*}
&(x_{i}-x_{i+1})^{2}+(y_{i}-y_{i+1})^{2}+(z_{i}-z_{i+1})^{2}-c = 0 \quad \text{and } \\
&(x_{i}-x_{i+2})^{2}+(y_{i}-y_{i+2})^{2}+(z_{i}-z_{i+2})^{2}-\frac{8}{3}c = 0 \quad \text{for all }i\in\mathbb{Z}_{8} \ .
\end{align*}
The constant $c$ is the bond length between two neigboring carbon atoms. By computational means, it has been determined to
have the value $c\thickapprox  2.21$ \cite{Breiding2018}. 

A dataset of $6040$ points sampled from this variety was produced by \cite{Martin2010}. 
The variety was further reduced 
to a subspace of $\R^5$. 
The reduction results in a compact $2$-dimensional singular surface 
which we will refer to as the \textit{reduced conformation space of cyclooctane}. 
The specific dimensionality reduction map consists of trigonometric functions so in particular is analytic; see \cite{Martin2010} for details. 
We are not aware of any proof that the reduced conformation space is also a variety, 
however, it is clear from the construction that the reduced conformation space is at least a compact subanalytic set 
by analyticity of the reduction map \cite{Bierstone1988}. Applying the reduction to the original $6040$ points, one obtains a set $\Omega$ of points lying on the reduced conformation space of cyclooctane.

We are particularly interested in the singular locus of the reduced conformation space. Based upon geometric considerations and the
sample tests discussed in \cite{Martin2010}, the reduced conformation space of cyclooctane is believed to be topologically equivalent to two M\"obius bands and a sphere glued together along two disjoint circles. Those two circles form the boundaries of the two M\"obius bands
and constitute the singular locus of the reduced conformation space. Chemically, the conformations represented by the two circles
are called \emph{peak (P)} and \emph{saddle (S)} and correspond to  maximal and minimal energy conformations, respectively, \cite{Martin2010}.
Using this information, we applied an adhoc method to extract the set of singular points from $\Omega$ resulting in a set $\operatorname{Sing}(\Omega$) of $233$ points shown in
Figures \ref{fig:7.4a}, \ref{fig:7.4b} and \ref{fig:7.4c} for different coordinate axes.

We applied the MAP model to the reduced conformation space for various values of $D$ but $D=4$ seemed to preform best. From the learned polynomial, we sampled a set $\Omega'$ of $40000$ points at threshold value $\eta = 10^{-7}$. We applied the singularity heuristic at a threshold value of $\varepsilon = 0.0003$ and obtained a set $\operatorname{Sing}(\Omega')$ of $235$ points. The set $\operatorname{Sing}(\Omega')$ is shown in
Figures \ref{fig:7.4d}, \ref{fig:7.4e} and \ref{fig:7.4f}.

The Wasserstein distance between $\operatorname{Sing}(\Omega$) and $\operatorname{Sing}(\Omega'$) was found to be $0.022$. We can see that despite the presence of noise coming from the sampling, the singularity heuristic succeeds to capture the overall geometry of the singular locus of the reduced conformation space of cyclooctane. This is further evidence that the reduced conformation space of cyclooctane is likely an algebraic variety and is likely defined by a polynomial of degree $4$.

\begin{figure}[H]
\centering
\begin{subfigure}{0.3\textwidth}\centering
\includegraphics[scale=0.25]{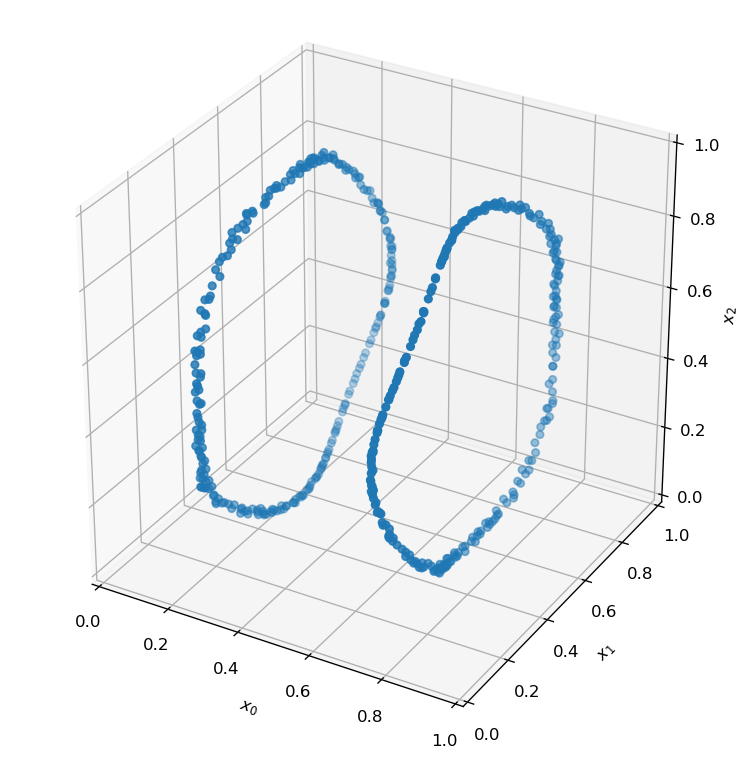}
\caption{}\label{fig:7.4a}
\end{subfigure}
\begin{subfigure}{.3\textwidth}\centering
\includegraphics[scale=0.25]{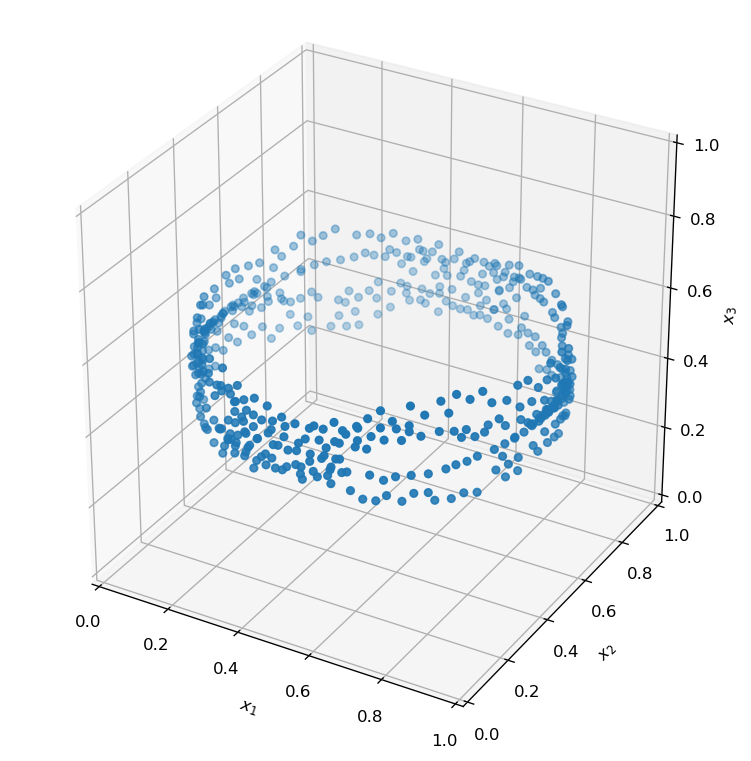}
\caption{}\label{fig:7.4b}
\end{subfigure}
\begin{subfigure}{.3\textwidth}\centering
\includegraphics[scale=0.25]{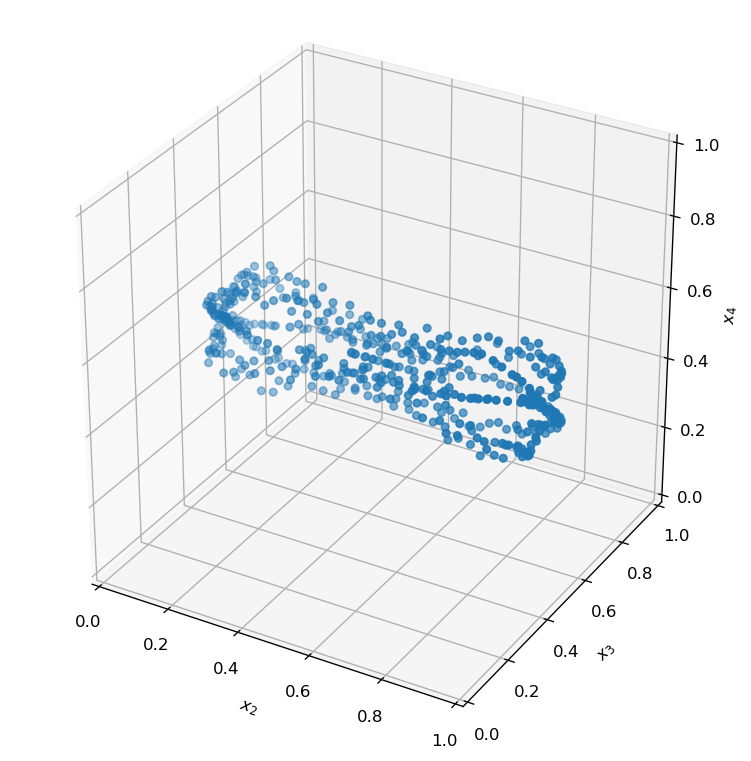}
\caption{}\label{fig:7.4c}
\end{subfigure}
\caption{Sampled singular locus of the conformational space of $(\text{CH}_2)_8$}
\end{figure}

\begin{figure}[H]
\centering
\begin{subfigure}{0.3\textwidth}\centering
\includegraphics[scale=0.25]{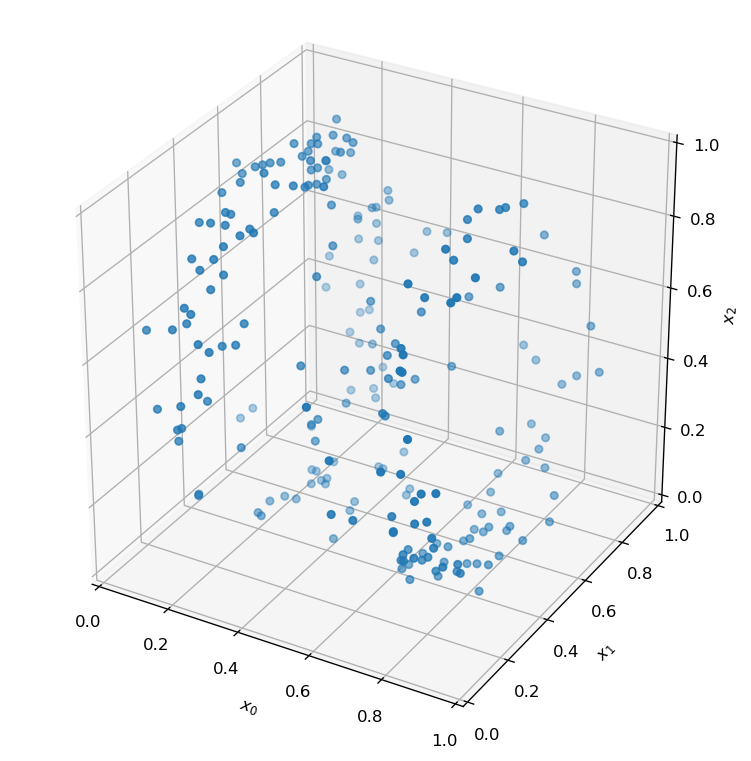}
\caption{}\label{fig:7.4d}
\end{subfigure}
\begin{subfigure}{.3\textwidth}\centering
\includegraphics[scale=0.25]{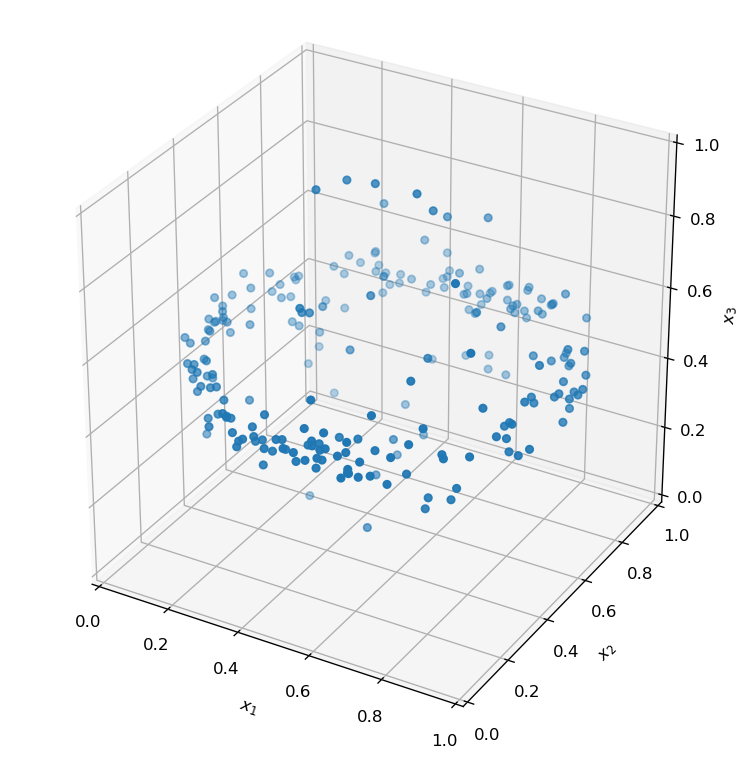}
\caption{}\label{fig:7.4e}
\end{subfigure}
\begin{subfigure}{.3\textwidth}\centering
\includegraphics[scale=0.25]{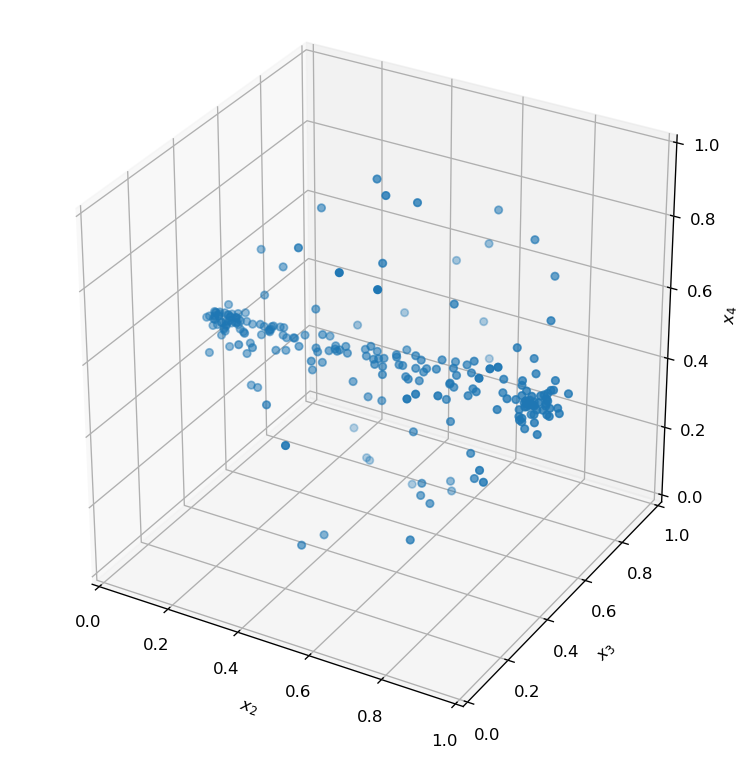}
\caption{}\label{fig:7.4f}
\end{subfigure}
\caption{Learned singular locus of the conformational space of $(\text{CH}_2)_8$}
\end{figure}

\end{example}

\begin{remark}
  The conformation space of cyclooctane has also been analyzed by Breiding et al.\ in \cite{Breiding2018},
  but the approach taken there is  different. In \cite{Breiding2018}, the authors do not consider the reduced conformation
  space but instead apply a mix of algebraic and topological data analysis methods to the unreduced conformation space.
  Based upon Martin et al.'s dataset \cite{Martin2010} sampled from the unreduced conformation space, 
  they determine in particular the dimension of the conformation space and compute the persistent homology of the dataset.   
  This approach returns the dimension of the conformation space to be $2$ and Betti numbers 
  indicating that the conformation space indeed should have a singular locus
  homeomorphic to the disjoint union of two circles.
  Note, however, that this method does not directly provide the particular location or coordinates of the singular points. Furthermore, Betti numbers do not always encode information as to whether the underlying space possesses singularities or not.
  Finally, we remark that the method of learning varieties presented
  in \cite{Breiding2018} has been applied there to the unreduced conformation space of cyclooctane, only.
\end{remark}

\begin{remark}
Let us briefly explain what has been achieved by considering the conformational space of
cyclooctane as an algebraic variety and comment on the potential broader use of singularity theory in chemistry.
According to the theoretical results by Hendrickson \cite{HendricksonCycooctane} and
Anet-Krane \cite{AnetKraneCycooctane}, cyclooctane has $10$ conformations:
chair, crown, boat, boat-chair, twist-boat-chair, boat-boat, twist-boat, twist-chair, chair-chair, and
twist-chair-chair. The existence of all these conformations has been verified experimentally.
The peak  and the saddle conformations described above by the singular locus of the reduced conformational
space were detected much later by the ground-breaking work \cite{Martin2010}, where sophisticated
methods from topological data analysis were used to study energy landscapes and conformational spaces.
We expect that to be  a more general phenomenon. Singular parts of (reduced) energy landscapes or
other chemically relevant surfaces such as the electron density or the electron localization function
are very small and difficult to find. But they might play an important role to theoretically
detect and describe new conformations or possibly even new reactions.  Further example studies from chemistry
are therefore intended. 
\end{remark}

\section{Related Work}
\label{sec:related-work}
The MAP and intersected MAP model we developed in Section \ref{sec:learning-variety} are inspired
by the learning method given in \cite{Breiding2018}. 
The method presented in \cite{Breiding2018} aims to find the kernel of the 
Vandermonde matrix $U$ in a numerically stable manner. However, this is presented as a purely numerical method without a given statistical basis. 
As such, the authors focus on the case where $\Omega$ is sampled from a variety without any anomalies present.

In the case when there is no sample noise, the method of \cite{Breiding2018} coincides with the MAP model from
Section  \ref{sec:learning-variety} since in this case $\lambda=0$ and the MAP solutions would be given by $E_{\lambda}=\ker(U^{T}U)=\ker(U)$. 
In the case when there is noise, the authors use a noise tolerance parameter $\tau>0$. 
In one instance, they compute $\ker(U)$ by finding the singular vectors of $U$ whose singular values are 
$\leq \tau$. This is similar to the QCQP from Section \ref{sec:learning-variety} since the eigenvectors of
$U^{T}U$ are the right-singular vectors of $U$ and the eigenvalues of $U^{T}U$ are the squares of the 
singular values of $U$. The difference is that we only accept the smallest eigenvalue (or singular value). This can have a significant effect if the noise levels are high or the underlying space is only approximately a variety, as shown in 
Example \ref{Comparison} below. Intuitively, this effect appears when we have two varieties  $V_{1}$ and $V_{2}$ which fit the data $\Omega$ well but not exactly. In this case, the intersection  $V_{1}\cap V_{2}$ might not be close to the data set  $\Omega$ at all as shown in the following example.


\begin{example}
\label{Comparison}
  Consider the noisy data $\Omega$ in Figure \ref{fig:8.1a} sampled from the plane $y-z=0$ with noise applied across the $z$ axis.
  Setting $D=1$, we find that the planes defined by the polynomials
  $\ell_{1}=0.73y-0.68z -0.03 $ (\ref{fig:8.1b}) and $\ell_{2}=0.17x+0.76y-0.61z-0.17$ (\ref{fig:8.1c}) are both in 
  $\bigoplus\limits_{\lambda < 0.003}E_{\lambda}$. Moreover, both planes are close to the original data. 
  However, the intersection $Z(\ell_{1})\cap Z(\ell_{2})$ is given by
  the noisy line in \ref{fig:8.1d} which has the wrong dimension.
\begin{figure}[H]
\centering
\begin{subfigure}{0.45\textwidth}
\includegraphics[scale=0.45]{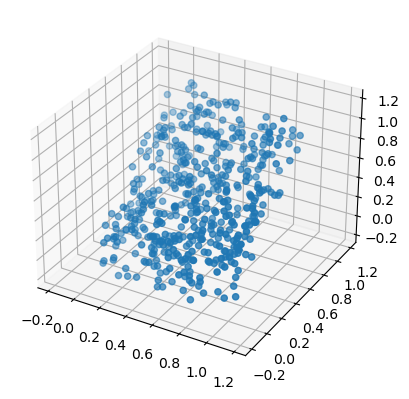}
\caption{}\label{fig:8.1a}
\end{subfigure}
\begin{subfigure}{.45\textwidth}
\includegraphics[scale=0.45]{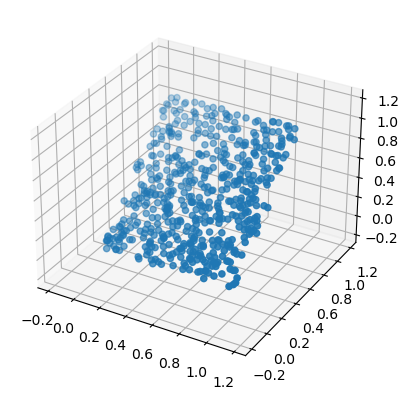}
\caption{}\label{fig:8.1b}
\end{subfigure}

\begin{subfigure}{0.45\textwidth}
\includegraphics[scale=0.45]{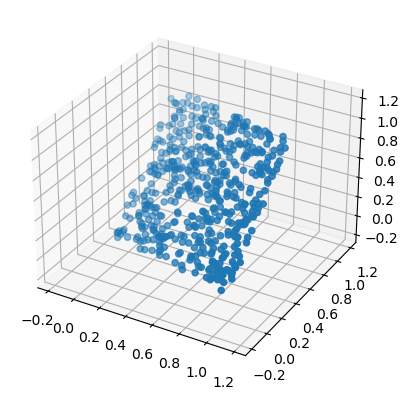}
\caption{}\label{fig:8.1c}
\end{subfigure}
\begin{subfigure}{.45\textwidth}
\includegraphics[scale=0.45]{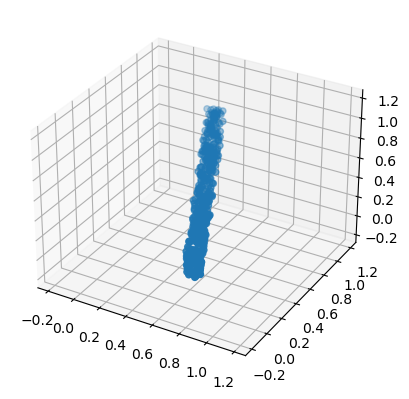}
\caption{}\label{fig:8.1d}
\end{subfigure}
\caption{}
\end{figure}
\end{example}

A related approach for singularity detection is taken in \cite{Stolz19664}. This approach involves local cohomology to test how well different regions of the data set can be approximated by Euclidean space. In contrast 
to algebraic varieties, the authors of \cite{Stolz19664} assume that the underlying geometry is a stratified space. However, their method does not rely on explicitly learning the underlying stratified space, and instead it uses persistent cohomology \cite{Carlsson2009} to detect the singular regions. This comes at a computational advantage, however, it also captures less information about the space itself. Furthermore, this method only applies to singularities where the space fails to be a topological manifold, and not the singularities where smoothness fails.






\section*{Acknowledgments}
The authors greatfully acknowledge support by the NSF under award OAC 1934725 for the project
\emph{DELTA: Descriptors of Energy Landscape by Topological Analysis}. M.J.\ Pflaum thanks David Jonas and David Walba for
crucial explanations  on the conformations of cylooctane. The authors also thank Henry Adams, Howy Jordan and Trevor Sesnic for helpful discussions.









\bibliography{mybib}

\providecommand{\href}[2]{#2}
\providecommand{\arxiv}[1]{\href{http://arxiv.org/abs/#1}{arXiv:#1}}
\providecommand{\url}[1]{\texttt{#1}}
\providecommand{\urlprefix}{URL }
\begin{thebibliography}{10}

\bibitem{AnetKraneCycooctane}
\newblock F.~Anet and J.~Krane,
\newblock Strain energy calculation of conformations and conformational changes
  in cyclooctane,
\newblock \emph{Tetrahedron Letter}, \textbf{8} (1973), 5029--5052.

\bibitem{barber}
\newblock D.~Barber,
\newblock \emph{Bayesian Reasoning and Machine Learning},
\newblock Cambridge University Press, USA, 2012.

\bibitem{Becker1993}
\newblock E.~Becker and R.~Neuhaus,
\newblock Computation of real radicals of polynomial ideals,
\newblock in \emph{Computational Algebraic Geometry}, vol. 109 of Progr.\
  Math.,
\newblock Birkh\"{a}user Boston, 1993,
\newblock 1--20,
\newblock \urlprefix\url{https://doi.org/10.1007/978-1-4612-2752-6_1}.

\bibitem{Bierstone1988}
\newblock E.~Bierstone and P.~D. Milman,
\newblock Semianalytic and subanalytic sets,
\newblock \emph{Publications math{\'{e}}matiques de l'IH{\'{E}}S}, \textbf{67}
  (1988), 5--42,
\newblock \urlprefix\url{https://doi.org/10.1007/bf02699126}.

\bibitem{BocCosRoyRAG}
\newblock J.~Bochnak, M.~Coste and M.-F. Roy,
\newblock \emph{Real algebraic geometry}, vol.~36 of Ergebnisse der Mathematik
  und ihrer Grenzgebiete (3) [Results in Mathematics and Related Areas (3)],
\newblock Springer-Verlag, Berlin, 1998,
\newblock Translated from the 1987 French original, Revised by the authors.

\bibitem{Convex}
\newblock S.~Boyd and L.~Vandenberghe,
\newblock \emph{Convex Optimization},
\newblock Cambridge University Press, 2004.

\bibitem{Breiding2018}
\newblock P.~Breiding, S.~Kali{\v{s}}nik, B.~Sturmfels and M.~Weinstein,
\newblock Learning algebraic varieties from samples,
\newblock \emph{Revista Matem{\'{a}}tica Complutense}, \textbf{31} (2018),
  545--593,
\newblock \urlprefix\url{https://doi.org/10.1007/s13163-018-0273-6}.

\bibitem{10.1007/978-3-319-96418-8_54}
\newblock P.~Breiding and S.~Timme,
\newblock Homotopycontinuation.jl: A package for homotopy continuation in
  julia,
\newblock in \emph{Mathematical Software -- ICMS 2018} (eds. J.~H. Davenport,
  M.~Kauers, G.~Labahn and J.~Urban),
\newblock Springer International Publishing, Cham, 2018,
\newblock 458--465.

\bibitem{Carlsson2009}
\newblock G.~Carlsson,
\newblock Topology and data,
\newblock \emph{Bulletin of the American Mathematical Society}, \textbf{46}
  (2009), 255--308.

\bibitem{ColdingMinicozzi}
\newblock T.~H. Colding and W.~P. Minicozzi~II,
\newblock {\L}ojasiewicz inequalities and applications,
\newblock in \emph{Surveys in {D}ifferential {G}eometry 2014. {R}egularity and
  evolution of nonlinear equations}, vol.~19 of Surv.\ Differ.\ Geom.,
\newblock Int. Press, Somerville, MA, 2015,
\newblock 63--82,
\newblock \url{https://doi.org/10.4310/SDG.2014.v19.n1.a3}.

\bibitem{SINGULAR}
\newblock W.~Decker, G.-M. Greuel, G.~Pfister and H.~Sch\"onemann,
\newblock {\sc Singular} {4-2-0} - {A} computer algebra system for polynomial
  computations,
\newblock \url{http://www.singular.uni-kl.de}, 2020.

\bibitem{8999343}
\newblock E.~{Dufresne}, P.~{Edwards}, H.~{Harrington} and J.~{Hauenstein},
\newblock Sampling real algebraic varieties for topological data analysis,
\newblock in \emph{2019 18th IEEE International Conference On Machine Learning
  And Applications (ICMLA)}, 2019,
\newblock 1531--1536.

\bibitem{doi:10.1021/jp962746i}
\newblock M.~I. Dykman, V.~N. Smelyanskiy, R.~S. Maier and M.~Silverstein,
\newblock Singular features of large fluctuations in oscillating chemical
  systems,
\newblock \emph{The Journal of Physical Chemistry}, \textbf{100} (1996),
  19197--19209,
\newblock \urlprefix\url{https://doi.org/10.1021/jp962746i}.

\bibitem{Fefferman2016}
\newblock C.~Fefferman, S.~Mitter and H.~Narayanan,
\newblock Testing the manifold hypothesis,
\newblock \emph{Journal of the American Mathematical Society}, \textbf{29}
  (2016), 983--1049,
\newblock \urlprefix\url{https://doi.org/10.1090/jams/852}.

\bibitem{Gfvert2020}
\newblock O.~G\"{a}fvert,
\newblock Computational complexity of learning algebraic varieties,
\newblock \emph{Advances in Applied Mathematics}, \textbf{121} (2020), 102100,
\newblock \urlprefix\url{https://doi.org/10.1016/j.aam.2020.102100}.

\bibitem{Gander1994}
\newblock W.~Gander, G.~H. Golub and R.~Strebel,
\newblock Least-squares fitting of circles and ellipses,
\newblock \emph{{BIT}}, \textbf{34} (1994), 558--578,
\newblock \urlprefix\url{https://doi.org/10.1007/bf01934268}.

\bibitem{10.5555/1557288}
\newblock G.-M. Greuel and G.~Pfister,
\newblock \emph{A Singular Introduction to Commutative Algebra},
\newblock 2nd edition,
\newblock Springer Publishing Company, Incorporated, 2007.

\bibitem{HarAG}
\newblock R.~Hartshorne,
\newblock \emph{Algebraic Geometry},
\newblock Graduate Texts in Mathematics, Springer New York, 2013.

\bibitem{HendricksonCycooctane}
\newblock J.~B. Hendrickson,
\newblock Molecular geometry.\ {V}.\ {E}valuation of functions and
  conformations of medium rings,
\newblock \emph{J.\ Am.\ Chem.\ Soc.}, \textbf{89} (1967), 7047–7061.

\bibitem{HUYNH1986196}
\newblock D.~T. Huynh,
\newblock A superexponential lower bound for {G}röbner bases and
  {C}hurch-{R}osser commutative thue systems,
\newblock \emph{Information and Control}, \textbf{68} (1986), 196--206,
\newblock
  \urlprefix\url{https://www.sciencedirect.com/science/article/pii/S0019995886800353}.

\bibitem{Lang:Algebra}
\newblock S.~Lang,
\newblock \emph{Algebra}, vol. 211 of Graduate Texts in Mathematics,
\newblock 3rd edition,
\newblock Springer-Verlag, New York, 2002,
\newblock \urlprefix\url{https://doi.org/10.1007/978-1-4613-0041-0}.

\bibitem{Leeuw2013HistoryON}
\newblock J.~Leeuw,
\newblock History of nonlinear principal component analysis, 
\newblock UCLA: Department of Statistics, UCLA (2013), \url{https://escholarship.org/uc/item/1vp9f9kz}.

\bibitem{LojESA}
\newblock S.~{\L}ojasiewicz,
\newblock Ensemble semi-analytique, 1965,
\newblock Mimeographi{\'e}, {I}nstitute des {H}autes {\'E}tudes {S}cientifique,
  {B}ures-sur-{Y}vette, {F}rance.

\bibitem{Martin2010}
\newblock S.~Martin, A.~Thompson, E.~A. Coutsias and J.-P. Watson,
\newblock Topology of cyclo-octane energy landscape,
\newblock \emph{The Journal of Chemical Physics}, \textbf{132} (2010), 234115,
\newblock \urlprefix\url{https://doi.org/10.1063/1.3445267}.

\bibitem{mcinnes2020umap}
\newblock L.~McInnes, J.~Healy and J.~Melville,
\newblock UMAP: Uniform manifold approximation and projection for dimension
  reduction, 
\newblock \href{https://arxiv.org/abs/1802.03426}{arXiv:1802.03426} (2020).

\bibitem{AhmadiMoughari2020ADRMLAD}
\newblock F.~A. Moughari and C.~Eslahchi,
\newblock ADRML: Anticancer drug response prediction using manifold learning,
\newblock \emph{Scientific Reports}, \textbf{10}.

\bibitem{nash1952}
\newblock J.~Nash,
\newblock Real algebraic manifolds,
\newblock \emph{Ann.\ of Math.\ (2)}, \textbf{56} (1952), 405--421.

\bibitem{Neuhaus1998}
\newblock R.~Neuhaus,
\newblock Computation of real radicals of polynomial ideals. {II},
\newblock \emph{J. Pure Appl. Algebra}, \textbf{124} (1998), 261--280,
\newblock \urlprefix\url{https://doi.org/10.1016/S0022-4049(96)00103-X}.

\bibitem{10.5555/1145132}
\newblock J.~Park,
\newblock \emph{Manifold Learning in Computer Vision},
\newblock PhD thesis, USA, 2005,
\newblock AAI3193223.

\bibitem{PflAGSSS}
\newblock M.~J. Pflaum,
\newblock \emph{Analytic and geometric study of stratified spaces}, vol. 1768
  of Lecture Notes in Mathematics,
\newblock Springer-Verlag, Berlin, 2001.

\bibitem{reiman1996singular}
\newblock A.~Reiman,
\newblock Singular surfaces in the open field line region of a diverted
  tokamak,
\newblock \emph{Physics of Plasmas}, \textbf{3} (1996), 906--913.

\bibitem{ruschendorf1985wasserstein}
\newblock L.~R{\"u}schendorf,
\newblock The {W}asserstein distance and approximation theorems,
\newblock \emph{Probability Theory and Related Fields}, \textbf{70} (1985),
  117--129.

\bibitem{Solern1991}
\newblock P.~Solernó,
\newblock Effective Łojasiewicz inequalities in semialgebraic geometry,
\newblock \emph{Applicable Algebra in Engineering, Communication and
  Computing}, \textbf{2} (1991), 1–14,
\newblock \urlprefix\url{http://dx.doi.org/10.1007/BF01810850}.

\bibitem{realrad}
\newblock S.~J. Spang,
\newblock \emph{On the computation of the real radical.},
\newblock PhD thesis, Technische Universit{\"a}t Kaiserslautern, 2007.

\bibitem{Stolz19664}
\newblock B.~J. Stolz, J.~Tanner, H.~A. Harrington and V.~Nanda,
\newblock Geometric anomaly detection in data,
\newblock \emph{Proceedings of the National Academy of Sciences}, \textbf{117}
  (2020), 19664--19669,
\newblock \urlprefix\url{https://www.pnas.org/content/117/33/19664}.

\bibitem{Tenenbaum2000AGG}
\newblock J.~Tenenbaum, V.~D. Silva and J.~Langford,
\newblock A global geometric framework for nonlinear dimensionality reduction.,
\newblock \emph{Science}, \textbf{290 5500} (2000), 2319--23.

\bibitem{Whitney65}
\newblock H.~Whitney,
\newblock Local properties of analytic varieties,
\newblock in \emph{Differential and {C}ombinatorial {T}opology ({A} {S}ymposium
  in {H}onor of {M}arston {M}orse)},
\newblock Princeton Univ. Press, Princeton, N.J., 1965,
\newblock 205--244.

\end{thebibliography}
\bibliographystyle{AIMS}

\end{document}